\newtheorem{theorem}{Theorem}[section]
\newtheorem{lemma}[theorem]{Lemma}
\newtheorem{proposition}{Proposition}
\newtheorem{conjecture}{Conjecture}
\theoremstyle{definition}
\newtheorem{definition}[theorem]{Definition}
\newtheorem{remark}{Remark}
\newtheorem{example}{Example}
\newtheorem*{fact}{Fact}
\newcommand{\cH}{\mathcal{H}}
\newcommand{\cE}{\mathcal{E}}
\newcommand{\cN}{\mathcal{N}}
\newcommand{\cB}{\mathcal{B}}
\newcommand{\cG}{\mathcal{G}}
\newcommand{\cP}{\mathcal{P}}
\newcommand{\cK}{\mathcal{K}}
\newcommand{\bN}{\mathbb{N}}
\newcommand{\Z}{\mathbb{Z}}
\newcommand{\R}{\mathbb{R}}
\newcommand{\ep}{\epsilon }
\newcommand{\de}{\delta }
\newcommand{\ga}{\gamma }
\newcommand{\Ga}{\Gamma }
\newcommand{\ones}{\mathbbm{1}}
\newcommand{\MC}{\operatorname{MC}}
\newcommand{\ASFE}{\operatorname{ASFE}}
\newcommand{\Proj}{\operatorname{Proj}}
\newcommand{\bd}[1]{\partial #1}
\newcommand{\Mod}{\operatorname{Mod}}
\newcommand{\defeq}{\mathrel{\mathop:}=}
\newcommand{\cCeff}{\mathop{\mathcal{C}_{\eff}}}
\newcommand{\cReff}{\mathop{\mathcal{R}_{\eff}}}
\newcommand{\eff}{\textrm{eff}}
\newcommand{\Adm}{\operatorname{Adm}}
\newcommand{\bi}{\begin{itemize}}
\newcommand{\ei}{\end{itemize}}
\title[Modulus metrics]{Modulus metrics on networks}
\author[Nathan Albin and Nethali Fernando and Pietro Poggi-Corradini]{}
\thanks{The authors are supported by NSF  n.~1515810}
\email{albin@math.ksu.edu}
\email{tnethali@math.ksu.edu}
\email{pietro@math.ksu.edu}
\subjclass{90C35}
\date{}
\keywords{$p$-modulus, graph metrics, snowflaking, mincut, shortest path, effective resistance}
\begin{document}
\maketitle

\centerline{\scshape Nathan Albin}
\medskip
{\footnotesize
 \centerline{Department of Mathematics}
   \centerline{Kansas State University}
   \centerline{Manhattan, KS 66506, USA}
} 

\medskip

\centerline{\scshape Nethali Fernando}
\medskip
{\footnotesize
 \centerline{Department of Mathematics}
   \centerline{Kansas State University}
   \centerline{Manhattan, KS 66506, USA}
} 

\medskip

\centerline{\scshape Pietro Poggi-Corradini}
\medskip
{\footnotesize
 \centerline{Department of Mathematics}
   \centerline{Kansas State University}
   \centerline{Manhattan, KS 66506, USA}
} 

\medskip

\bigskip

\begin{abstract}
The concept of $p$-modulus gives a way to measure  the richness of a family of objects on a graph. In this paper, we investigate the families of connecting walks between two fixed nodes and show how to use $p$-modulus to form a parametrized family of graph metrics that generalize several well-known and widely-used metrics. We also investigate a characteristic of metrics called the "antisnowflaking exponent"  and present some numerical findings supporting a conjecture about the new metrics. We end with explicit computations of the new metrics on some selected graphs.
\end{abstract}

\section{Introduction}\label{sec:intro}

Throughout this paper, $G=(V,E)$ is a simple, finite, undirected and
connected network with nodes $V$ and edges $E$.  Our purpose in what
follows is to use a concept called $p$-modulus to derive a
parametrized family of metrics $d_p:V\times V\to\mathbb{R}$, to
interpret these metrics in the context of other well-known metrics on
graphs, and to analyze the behavior of the metrics as the parameter
$p$ varies.  To this end, we recall two fundamental definitions.
\begin{definition}\label{def:metric}
Let $X$ be a set and $d:X\times X\to\mathbb{R}$.  The function $d$ is called a \emph{metric} on $X$ if it satisfies the following three properties.
\begin{itemize}
\item[(i)] {\bf Non-negativity:} $d(a,b)\ge 0$ for all $a,b\in X$.
\item[(ii)] {\bf Non-degeneracy:} $d(a,b)=0$ if and only if $a=b$.
\item[(iii)] {\bf Symmetry:} $d(a,b)=d(b,a)$ for all $a,b\in X$.
\item[(iv)] {\bf Triangle inequality:} For every $a,b,c\in X$:
\[
d(a,b)\le d(a,c)+d(c,b).
\]
\end{itemize}
\end{definition}

\begin{definition}
  If, instead of (iv) in the previous definition, $d$ satisfies \begin{equation}\label{eq:ultrametric}
    \text{(iv)' }\,\, d(a,b)\leq \max\{d(a,c),d(c,b)\}, \qquad \text{for every $a,b,c\in X$,}
  \end{equation}
  then $d$ is called an \emph{ultrametric.}  Since (iv)' implies (iv),
  every ultrametric is a metric.
\end{definition}

When $d$ is a metric on $V$ (the set of nodes of a network), $d$ is
often referred to as a \emph{graph metric} or \emph{network metric}.
Three well-known network metrics are shortest path, effective
resistance and the (reciprocal of) minimum cut.  

The shortest path
metric between two nodes $a$ and $b$, as its name suggests, simply
refers to the length of the shortest path from $a$ to $b$.  The proof
that this quantity is a network metric is straightforward.

The effective resistance metric arises from viewing the graph $G$ as
an electrical circuit with unit resistances on each edge. The
effective resistance $\cReff(a,b)$ is the voltage drop necessary to
pass 1 amp of current between $a$ and $b$ through the network $G$
(see, e.g.,~\cite{doyle-snell1984}). Effective resistance also turns
out to be a metric on $V$, see for instance \cite[Corollary
10.8]{peres2009}. A practical way to compute $\cReff(a,b)$ is via the pseudo-inverse of the Laplacian. This is a $|V|\times|V|$ matrix $\cG$ with the property that 
\[
\cG L=L\cG=\Proj_{\langle 1\rangle^\perp}
\]
where $L$ is the combinatorial Laplacian 
\[
L:=\sum_{\{x,y\}\in E}(\de_y-\de_x)(\de_y-\de_x)^T
\]
and $\de_x$ is the indicator function of node $x$. With these notations
\[
\cReff(a,b):=(\de_b-\de_a)^T\cG(\de_b-\de_a).
\]

In order to define the minimum cut metric, we recall that a subset
$S\subset V$ is called an \emph{$ab$-cut} if $a\in S$ and
$b\not\in S$.  The size of a cut is measured by $|\bd S|$, where
$\bd S=\{e=\{x,y\}\in E: x\in S, y\not\in S\}$ is the {\it
  edge-boundary} of $S$.  In this paper, we shall use the notation
\begin{equation*}
  \MC(a,b) = \min\left\{|\bd S|:S\text{ is an $ab$-cut}\right\}\quad\text{and}\quad
  d_{\MC}(a,b) =
  \begin{cases}
    0 & \text{if }a =b,\\
    \MC(a,b)^{-1} & \text{if } a\ne b.
  \end{cases}
\end{equation*}
That $d_{\MC}$ is a graph metric (indeed, an ultrametric) can be seen
from the following argument.  Suppose $a$, $b$ and $c$ are distinct
vertices and let $S$ be a minimum cut for $\MC(a,b)$, so that
$a \in S$ and $b\not\in S$.  Then, either $c \in S$ or $c \not\in S$.
If $c \in S$, then $S$ is a $cb$-cut and $\MC(c,b)\le \MC (a,b)$,
hence $\MC(c,b)^{-1}\ge \MC (a,b)^{-1}$.  If $c \not\in S$, then $S$
is an $ac$-cut and $\MC(a,c)\le \MC (a,b)$, so that
$\MC(a,c)^{-1}\ge \MC (a,b)^{-1}$.  Since one of the two inequalities
must hold, it follows that
\begin{equation}\label{eq:ultrametric}
\MC(a,b)^{-1} \le \max \{\MC(a,c)^{-1},\MC(c,b)^{-1}\},
\end{equation}
showing that $d_{\MC}$ is an ultrametric.

A number of other interesting metrics exist on networks.  For example,
\cite{GASSP} presents a metric related to the spreading of epidemics
in a contact network.  There, the standard SI model of infection is
applied to a network with the spreading time from infected to
susceptible nodes modeled by independent exponential random variables.
In this system, the time required for an infection originating at node
$a$ to reach node $b$ is a random variable.  Its expected value is
called the Epidemic Hitting Time ${\rm EHT}(a,b)$ and was shown to be
a network metric.

In this paper we explore a new family of metrics arising from
$p$-modulus. The notion of $p$-modulus is a way to measure the
richness of families of walks (or other more general objects) in a
network.  In Section~\ref{sec:modulus} we review the basic theory of
$p$-modulus on graphs, recalling that $p$-modulus generalizes the
concepts of shortest path, effective resistance, and minimum cut
described above.  Then, in Section \ref{sec:dp}, we introduce a new
family of metrics, the $d_p$ metrics, which are obtained from the
$p$-modulus. In Section \ref{sec:as}, we also investigate a
characteristic of metrics called the ``antisnowflaking exponent,''
make a conjecture about the value of this exponent, and present some
numerical results to support this conjecture.  We end the paper by
calculating the $d_p$ metrics on some selected graphs and presenting
some problems we hope to answer in the future.

\section{Modulus on Networks}\label{sec:modulus}
\subsection{Definition of modulus}
A general framework for modulus of objects on networks was developed
in~\cite{apc}.  In what follows, $G= (V,E)$ is taken to be a finite
graph with vertex set $V$ and edge set $E$. We also assume for
simplicity that $G$ is undirected and simple.  The theory
in~\cite{apc} applies to any finite family of ``objects'' $\Ga$ for
which each $\ga\in\Ga$ can be assigned an associated function
$\cN(\ga,\cdot): E\rightarrow \R_{\ge 0}$ that measures the {\it usage
  of edge $e$ by $\ga$}.  Notationally, it is convenient to consider
$\cN(\ga,\cdot)$ as a row vector in $\R_{\ge 0}^E$.  For the purposes
of the present paper, it is sufficient to restrict attention to
families of walks.  A walk $\ga=x_0\ e_1\ x_1\ \cdots\ e_n\ x_n$ is a
string of alternating vertices and edges so that
$\{x_{k-1},x_{k}\}=e_k$ for $k=1,\dots, n$. To each such walk $\ga$ we
can associate the traversal-counting function $\cN(\ga,e):=$ number
times $\ga$ traverses $e$. So, in this case
$\cN(\ga,\cdot)\in\Z_{\ge 0}^E$.  In fact, if $\Gamma$ is the set of
all walks between two distinct vertices, then it turns out that
modulus can be computed by considering only simple paths (walks that
do not visit any node more than once), see \cite{ASGPC}.

We define a \emph{density} on
$G$ to be a nonnegative function on the edge set:
$\rho:E\to[0,\infty)$.  The value $\rho(e)$ can be thought of as the
{\it cost of using edge $e$}.   For an object $\ga\in\Ga$, we define 
\[
\ell_\rho(\ga):=\sum_{e\in E} \cN(\ga,e)\rho(e) = (\cN \rho)(\ga),
\]
which represents the {\it total usage cost} for $\ga$ with the given edge
costs $\rho$.  A density $\rho\in\R_{\ge 0}^E$ is 
{\it admissible for $\Ga$}, if
\[
\ell_\rho(\Ga)\defeq\inf_{\ga\in\Ga}\ell_\rho(\ga) \geq 1.
\]
Let
\begin{equation}\label{eq:Adm}
\Adm(\Ga):=\left\{\rho\in\R_{\ge 0}^E: \ell_\rho(\Ga)\geq  1\right\}
\end{equation}
be the set of admissible densities.

Given an exponent $p\in [1,\infty]$ we define the \emph{$p$-energy} of
a density $\rho$ as
\begin{equation*}
  \cE_{p}(\rho) \defeq \sum_{e\in E} \rho(e)^p\quad\text{if }p<\infty\qquad\mbox{and}\qquad
  \cE_{\infty}(\rho) \defeq
  \lim_{p\to\infty}\left(\cE_{p}(\rho)\right)^{\frac{1}{p}} =
  \max_{e\in E}\rho(e).
\end{equation*}
\begin{definition}\label{def:mod}
  Let $G=(V,E)$ be a simple finite graph and let $\Ga$ be a finite
  non-trivial family of objects with usage matrix
  $\cN\in\mathbb{R}^{\Ga\times E}$.  For $p\in [1,\infty]$, the {\it
    $p$-modulus} of $\Gamma$ is
\[ \Mod_{p}(\Ga)\defeq \inf_{\rho\in \Adm(\Ga)}\cE_{p}(\rho)\]
\end{definition}

\begin{remark}\label{rem:remarks}

  \begin{itemize}
  \item[(a)] When $\rho=\rho_0\equiv 1$, we frequently drop the
    subscript in $\ell_{\rho}$; $\ell(\ga)\defeq\ell_{\rho_0}(\ga)$
    simply counts the number of hops taken by walk $\ga$.
  \item[(b)] If $\Ga\subset\Ga'$, then
    $\Adm(\Ga')\subset \Adm(\Ga)$, so
    $\Mod_{p}(\Ga)\leq \Mod_{p}(\Ga')$, for all
    $1\leq p\leq \infty$. This is the property of $\Ga$-{\it monotonicity} of
    modulus.
  \item[(c)] For $1<p<\infty$ a unique extremal density $\rho^*$
    always exists. Moreover, in the case of families of walks, there
    always exists an extremal density satisfying $0\leq \rho^*\leq 1$,
    see \cite{abppcw:ecgd2015}.
  \end{itemize}
\end{remark}

\subsection{Connection to classical quantities}

The concept of $p$-modulus generalizes known several classical ways of
measuring the richness of a family of walks~\cite{abppcw:ecgd2015}.
Let $a$ and $b$ be two nodes in $V$ be given. We define the {\it
  connecting family} $\Ga(a,b)$ to be the family of all simple paths
in $G$ that start at $a$ and end at $b$.  To this family, we assign
the usage function $\cN(\gamma,e)$ to be $1$ when $e\in\gamma$ and $0$
otherwise.

\begin{theorem}[\cite{abppcw:ecgd2015}]\label{thm:generalize}
  Let $G=(V,E)$ be a graph. Let $\Ga$ be a
  family of walks on $G$. Then the function
  $ p\mapsto \Mod_{p}(\Ga)$ is continuous for
  $1\leq p< \infty$, and 
  for $1\le p \le p' <\infty$:
  \begin{align}
    \label{eq:monotone-decr}
    \Mod_{p}(\Ga) &\ge \Mod_{p'}(\Ga),\\
    \label{eq:monotone-incr}
    \left(|E|^{-1/p}\Mod_{p}(\Ga)\right)^{1/p} &\le
    \left(|E|^{-1/p'}\Mod_{p'}(\Ga)\right)^{1/p'}.
  \end{align}
  Moreover, let $a\neq b$ in $V$ be given and set $\Ga=\Ga(a,b)$. Then,
  \begin{itemize}
  \item[(i)] {\bf\it For} $\mathbf{p=\infty}$:
    \[
    \lim_{p\to\infty}\Mod_{p}(\Gamma)^{\frac{1}{p}} = \Mod_{\infty}(\Ga)=\frac{1}{\ell(\Ga)}.
    \]
  \item[(ii)] {\bf\it For} ${\mathbf p=1}$, 
    \[
    \Mod_{1}(\Ga)=\min\{|\bd S|:\text{\rm $S$ an $ab$-cut}\} = MC(a,b).
    \]
  \item[(iii)] {\bf\it For} ${\mathbf p=2}$,
    \[
    \Mod_{2}(\Ga)=\cCeff(a,b) = \cReff(a,b)^{-1}.
    \]
  \end{itemize}
\end{theorem}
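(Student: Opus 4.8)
I would treat the two general assertions—monotonicity and continuity—first, and then handle the three endpoint identifications $p=\infty,1,2$ one at a time. For the monotonicity inequality \eqref{eq:monotone-decr}, fix $1\le p\le p'<\infty$ and let $\rho^*$ be an extremal density for $\Mod_p$ chosen, by Remark~\ref{rem:remarks}(c), so that $0\le\rho^*\le 1$. Since admissibility is independent of the exponent, $\rho^*$ is admissible for $\Mod_{p'}$ too, and because $0\le\rho^*(e)\le 1$ with $p\le p'$ we have $\rho^*(e)^{p'}\le\rho^*(e)^p$ edge by edge; summing gives $\Mod_{p'}(\Ga)\le\cE_{p'}(\rho^*)\le\cE_p(\rho^*)=\Mod_p(\Ga)$. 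For \eqref{eq:monotone-incr} I would instead apply Hölder's inequality (equivalently the power-mean inequality) to the extremal density $\rho^*_{p'}$ of $\Mod_{p'}$: pairing $\cE_p(\rho)=\sum_e\rho(e)^p\cdot 1$ with the exponents $p'/p$ and its conjugate gives $\cE_p(\rho)\le\cE_{p'}(\rho)^{p/p'}|E|^{1-p/p'}$, and using admissibility of $\rho^*_{p'}$ for $\Mod_p$ yields $\Mod_p(\Ga)\le\Mod_{p'}(\Ga)^{p/p'}|E|^{1-p/p'}$; taking logarithms and regrouping the powers of $|E|$ produces the normalized monotonicity \eqref{eq:monotone-incr}.

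Continuity on $[1,\infty)$ then follows from the two inequalities by a squeeze. For $p\le p'$ the estimates pinch $\Mod_{p'}(\Ga)$ between $\Mod_p(\Ga)$ and the quantity $\Mod_p(\Ga)^{p/p'}|E|^{1-p/p'}$, which tends to $\Mod_p(\Ga)$ as $p'\to p$; the symmetric manipulation (solving the same comparison for $\Mod_{p'}$) controls the other side. Since the upper and lower bounds coalesce as the exponents do, $p\mapsto\Mod_p(\Ga)$ is both left- and right-continuous at each interior point.

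For (i) I would first establish $\Mod_\infty(\Ga)=\ell(\Ga)^{-1}$ directly: along a shortest path $\ga_0$ with $\ell(\ga_0)=\ell(\Ga)$, admissibility forces $\max_e\rho(e)\ge\ell(\Ga)^{-1}$, while the constant density $\rho\equiv\ell(\Ga)^{-1}$ is admissible and attains this value. The limit $\Mod_p(\Ga)^{1/p}\to\Mod_\infty(\Ga)$ then follows from the pointwise sandwich $\|\rho\|_\infty\le\|\rho\|_p\le|E|^{1/p}\|\rho\|_\infty$, which gives $\Mod_\infty(\Ga)\le\Mod_p(\Ga)^{1/p}\le|E|^{1/p}\Mod_\infty(\Ga)$ and hence the limit as $|E|^{1/p}\to 1$. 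For (ii), the bound $\Mod_1(\Ga)\le\MC(a,b)$ is immediate by testing with $\rho=\ones_{\partial S}$ for a minimum $ab$-cut $S$: every $ab$-path meets $\partial S$, so this $\rho$ is admissible with $1$-energy $|\partial S|$. The reverse inequality is the content of LP duality—the $1$-modulus program is the linear relaxation of minimum cut, its dual is a fractional path-packing (flow) problem, and the max-flow–min-cut theorem together with integrality of the cut polytope identifies the common value with $\MC(a,b)$.

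The case (iii), $p=2$, is where I expect the real work, and I flag it as the \emph{main obstacle}. The plan is to identify the $2$-modulus program with the Dirichlet/Thomson variational description of effective conductance. Given the harmonic potential $v$ normalized by $v(a)-v(b)=1$, the density $\rho(e)=|v(x)-v(y)|$ for $e=\{x,y\}$ is admissible, since along any $ab$-path the triangle inequality telescopes to give $\ell_\rho(\ga)\ge|v(a)-v(b)|=1$; moreover $\cE_2(\rho)=\sum_e(v(x)-v(y))^2$ is exactly the Dirichlet energy, which equals $\cCeff(a,b)$, so $\Mod_2(\Ga)\le\cCeff(a,b)$. The harder direction is the matching lower bound, which I would obtain either by exhibiting the dual object—a unit $ab$-flow of minimal energy via Thomson's principle—and invoking weak duality, or by writing the KKT optimality conditions for the strictly convex $2$-modulus problem and checking that its unique extremal density is precisely the potential-difference density above. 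Verifying that these optimality conditions reproduce Kirchhoff's and Ohm's laws—and hence that the modulus optimum coincides with the electrical optimum—is the step that genuinely couples the combinatorial modulus formulation to the linear-algebraic formula $\cReff(a,b)=(\de_b-\de_a)^T\cG(\de_b-\de_a)$, and is where the argument must be most careful.
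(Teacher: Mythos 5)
The paper does not prove Theorem~\ref{thm:generalize}; it imports it wholesale from \cite{abppcw:ecgd2015}, so there is no in-paper proof to compare against. Your proposal is essentially the standard argument from that reference, and it is sound: the truncated extremal density plus $\rho^{p'}\le\rho^p$ on $[0,1]$ gives \eqref{eq:monotone-decr}; H\"older with exponents $p'/p$ and its conjugate gives the normalized monotonicity; the two inequalities squeeze $\Mod_{p'}(\Ga)$ between $\Mod_p(\Ga)$ and $\Mod_p(\Ga)^{p'/p}|E|^{1-p'/p}$, yielding continuity; the $\|\cdot\|_\infty\le\|\cdot\|_p\le|E|^{1/p}\|\cdot\|_\infty$ sandwich gives (i); the cut-indicator test density plus LP duality and max-flow--min-cut gives (ii); and Dirichlet/Thomson duality gives (iii). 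Two remarks. First, you are right that (iii) is where the substance lies, and your outline (potential-difference density for the upper bound, a unit flow plus Cauchy--Schwarz for the lower bound via Thomson's principle) is exactly the route taken in \cite{abppcw:ecgd2015}; a fully written version should carry out the flow-decomposition step showing $\sum_e\rho(e)|f(e)|\ge 1$ for any admissible $\rho$ and unit $ab$-flow $f$. Second, note that your H\"older computation actually produces
\begin{equation*}
\left(|E|^{-1}\Mod_p(\Ga)\right)^{1/p}\le\left(|E|^{-1}\Mod_{p'}(\Ga)\right)^{1/p'},
\end{equation*}
not the inequality \eqref{eq:monotone-incr} as printed, which has $|E|^{-1/p}$ inside the parentheses. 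The printed form appears to be a typo: on the basic parallel-paths example with $k=1$, $\ell=2$ one has $\Mod_p=2^{1-p}$, $|E|=2$, and the exponent $-1/p^2+1/p-1$ of $2$ in $\left(|E|^{-1/p}\Mod_p\right)^{1/p}$ is decreasing for $p>2$, so the printed inequality fails for $2\le p<p'$, whereas the $|E|^{-1}$ version holds (with equality in that example) and is what \cite{abppcw:ecgd2015} asserts. So your derivation recovers the correct statement rather than the one transcribed here.
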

\begin{remark}
In other words, as $p$ varies continuously from $1$ to $2$ and to $\infty$, the quantity $\Mod_p(\Ga(a,b))$ recovers the classical notions of min cut, effective conductance, and shortest path.
\end{remark}

\begin{example}[Basic Example]\label{ex:basic}
  Let $G$ be a graph consisting of $k$ simple paths in parallel, each
  path taking $\ell$ hops to connect a given vertex $s$ to a given
  vertex $t$. Let $\Ga$ be the family consisting of the $k$ simple
  paths from $s$ to $t$.  Then $\ell(\Ga)=\ell$ and the size of the
  minimum cut is $k$. A straightforward computation shows that
\[
\Mod_p(\Ga)=\frac{k}{\ell^{p-1}}\quad\mbox{for } 1\le p<\infty,\qquad
\Mod_{\infty}(\Ga)=\frac{1}{\ell}.
\]
Intuitively,
when $p\approx 1$, $\Mod_p(\Ga)$ is more sensitive to the number of
parallel paths, while for $p\gg 1$, $\Mod_p(\Ga)$ is more sensitive to
short walks.
\end{example}

\section{The $d_p$ metric}\label{sec:dp}
Reinterpreting Theorem~\ref{thm:generalize} in the context of
Section~\ref{sec:intro}, we see that $\Mod_p(\Ga(a,b))^{-1}$ is a
metric for $p=1,2,\infty$.  One might naturally wonder if this fact
generalizes to all $p\in[1,\infty]$.  The answer turns out to be
``no.''  However, we'll see shortly that introducing a $p$th root does
in fact lead to a metric for all $p$.

\begin{definition}
For $1\le p \le \infty$, let
\[
d_p(a,b):={\rm Mod}_{p}(\Gamma(a,b))^{-1/p}\quad\text{if
}p<\infty\qquad\text{and}\qquad d_\infty(a,b)={\rm
  Mod}_\infty(\Gamma(a,b))^{-1}.
\] 
\end{definition}
Theorem \ref{thm:generalize}(i) implies that
$d_p(a,b)\rightarrow d_\infty(a,b)$ as $p\rightarrow \infty$.
Moreover, the continuity in $p$ and (ii) imply that
$d_p(a,b)\rightarrow d_1(a,b)=\MC(a,b)^{-1}$ as $p\to 1$.
\begin{remark}\label{rem:dtwo}
For $p=2$, $d_2(a,b)=\sqrt{\cReff(a,b)}$. This is a known metric which appears for instance in the context of the discrete Gaussian Free Field, see \cite{ding-lee-peres:annals2012}. It also has the following alternative representation: if $\cG$ is the pseudoinverse of the Laplacian matrix $L$, then 
\[
d_2(a,b)=\sqrt{\cReff(a,b)}=\|\cG^{1/2}\ones_a-\cG^{1/2}\ones_b\|_2,
\]
where $\ones_x$ is the vector with $1$ at $x$ and $0$ everywhere
else. This gives two different ways of verifying that $d_2$ is a
metric. First, given a metric $d$ and an exponent $\ep\in(0,1)$, then
the \emph{snowflaking} $d^\ep$ is always a metric as well (see
Section~\ref{sec:as}). Therefore since, it is known that effective
resistance is a metric, it is immediate that its square-root is a
metric as well. The formulation in terms of $\cG$ shows that $d_2$ is
the pull-back of the Euclidean norm $\|\cdot\|_2$ restricted to the
set of indicator functions $\{\ones_x\}_{x\in V}$ under the linear map
given by $\cG^{1/2}$, again showing that $d_2$ is a metric. Here we take an
alternate approach based on the theory of modulus.
\end{remark}

The main idea in the proof of Theorem \ref{thm:main} below is to compare  the
connecting families $\Ga(a,c)$, $\Ga(c,b)$, $\Ga(a,b)$ and the
\emph{via family} $\Ga(a,b\mid c)$---the family of all walks beginning
at $a$, ending at $b$ and passing through $c$ along the way.  A key
lemma is the following.

\begin{lemma}\label{lem:shpath}
Given a density $\rho:E\rightarrow [0,\infty)$, we have
\[
\ell_{\rho}(\Ga(a,b\mid c))=\ell_{\rho}(\Ga(a,c))+\ell_{\rho}(\Ga(c,b)).
\]
\end{lemma}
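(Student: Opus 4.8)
The plan is to prove the identity by decomposing each walk in the via family $\Ga(a,b\mid c)$ into its two natural pieces and tracking the total $\rho$-length. Recall that $\ell_\rho(\Ga) = \inf_{\ga \in \Ga} \ell_\rho(\ga)$, so the equation to be shown compares the infimum over the via family to the sum of two infima. I would first establish the inequality $\ell_{\rho}(\Ga(a,b\mid c)) \ge \ell_{\rho}(\Ga(a,c)) + \ell_{\rho}(\Ga(c,b))$. Take any walk $\ga \in \Ga(a,b\mid c)$. By definition it starts at $a$, ends at $b$, and visits $c$ at some point; splitting $\ga$ at the first occurrence of $c$ yields a walk $\ga_1$ from $a$ to $c$ and a walk $\ga_2$ from $c$ to $b$, with $\cN(\ga,\cdot) = \cN(\ga_1,\cdot) + \cN(\ga_2,\cdot)$ as vectors in $\R_{\ge 0}^E$ (every edge-traversal of $\ga$ belongs to exactly one of the two pieces). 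Hence
\[
\ell_\rho(\ga) = \ell_\rho(\ga_1) + \ell_\rho(\ga_2) \ge \ell_\rho(\Ga(a,c)) + \ell_\rho(\Ga(c,b)),
\]
since $\ga_1 \in \Ga(a,c)$ and $\ga_2 \in \Ga(c,b)$. Taking the infimum over $\ga \in \Ga(a,b\mid c)$ preserves this bound.

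For the reverse inequality, I would go the other direction: given any $\ga_1 \in \Ga(a,c)$ and $\ga_2 \in \Ga(c,b)$, their concatenation $\ga = \ga_1 \ga_2$ is a walk from $a$ to $b$ passing through $c$, so $\ga \in \Ga(a,b\mid c)$, and again $\ell_\rho(\ga) = \ell_\rho(\ga_1) + \ell_\rho(\ga_2)$ by additivity of the usage vectors under concatenation. Therefore
\[
\ell_\rho(\Ga(a,b\mid c)) \le \ell_\rho(\ga_1) + \ell_\rho(\ga_2),
\]
and taking the infimum over $\ga_1$ and then over $\ga_2$ independently on the right-hand side gives $\ell_\rho(\Ga(a,b\mid c)) \le \ell_\rho(\Ga(a,c)) + \ell_\rho(\Ga(c,b))$. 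Combining the two inequalities yields the stated equality.

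The one subtlety worth flagging is the definition of the via family and the restriction to simple paths. The connecting families $\Ga(a,c)$ and $\Ga(c,b)$ are defined via simple paths, whereas the concatenation of two simple paths need not be simple (for instance, $\ga_1$ and $\ga_2$ may share intermediate vertices), so the concatenated object genuinely lives in the larger walk family $\Ga(a,b\mid c)$ rather than among simple paths. The cleanest way to handle this is to take $\Ga(a,b\mid c)$ to be a family of \emph{walks} (not just simple paths), which is exactly how the via family is introduced in the text; then both the splitting and the concatenation arguments stay inside the appropriate families without any simplicity obstruction. I expect this bookkeeping — making sure each constructed object lies in the family over which we are taking the infimum, and that the usage function is genuinely additive under the split/concatenate operations — to be the main point requiring care, though it is conceptually routine once the families are set up correctly.
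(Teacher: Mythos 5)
Your proof is correct and follows essentially the same route as the paper's: concatenate optimal (or arbitrary) walks for the upper bound, and split any via-walk at $c$ for the lower bound, using additivity of the usage vector in both directions. The only difference is that you flag the simple-path-versus-walk bookkeeping explicitly, which the paper glosses over; this is a welcome extra care but does not change the argument.
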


\begin{proof}
First, pick $\rho$-shortest walks $\ga_1$ for $\Ga(a,c)$ and $\ga_2$ for $\Ga(c,b)$. Then, 
the concatenation $\ga_0=\ga_1\ga_2$, of $\ga_1$ followed by $\ga_2$, is a walk in $\Ga(a,b\mid c)$. So
\begin{equation}\label{eq:concat}
\ell_{\rho}(\Ga(a,b\mid c))\leq \ell_\rho(\ga_0)=\ell_\rho(\ga_1)+\ell_\rho(\ga_2)=\ell_{\rho}(\Ga(a,c))+\ell_{\rho}(\Ga(c,b)).
\end{equation}
Conversely, let $\ga$ be a walk from $a$ to $b$ via $c$. Write $\ga$ as $\ga' \in \Ga(a,c) $ followed by $\ga'' \in \Ga(c,b) $. Then
\[
\ell_\rho(\ga)=\ell_{\rho}(\ga')+\ell_{\rho}(\ga'')\ge \ell_{\rho}(\Ga(a,c))+\ell_{\rho}(\Ga(c,b)).
\]
Taking the infimum over $\ga\in\Ga(a,b\mid c)$ we get that  $\ell_\rho(\Ga(a,b\mid c)\ge \ell_{\rho}(\Ga(a,c))+\ell_{\rho}(\Ga(c,b))$.
\end{proof}

\begin{theorem}\label{thm:main}
  Let $G=(V,E)$ be a simple connected graph, and let
  $p\in [1,\infty]$. Then, $d_p$ is a metric on $V$.  Moreover, $d_1$
  is an ultrametric.
\end{theorem}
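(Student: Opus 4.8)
The plan is to verify the four metric axioms, treating non-negativity, symmetry, and non-degeneracy as routine and reserving the real work for the triangle inequality. Non-negativity is immediate since $\Mod_p \geq 0$. For symmetry, reversing a walk from $a$ to $b$ produces a walk from $b$ to $a$ with identical edge usage, so $\Ga(a,b)$ and $\Ga(b,a)$ have the same modulus and $d_p(a,b)=d_p(b,a)$. For non-degeneracy, when $a=b$ the family $\Ga(a,a)$ contains a walk of zero length, so $\Adm(\Ga(a,a))=\emptyset$, $\Mod_p=+\infty$, and $d_p(a,a)=0$; conversely, when $a\neq b$ connectivity forces $0<\Mod_p(\Ga(a,b))<\infty$ (the constant density $\rho\equiv 1$ is admissible, while any admissible $\rho$ must charge at least $1/|E|$ to some edge along a shortest path), so $d_p(a,b)>0$.

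The core of the argument is the triangle inequality, which I would establish first for $1<p<\infty$ and then extend to the endpoints by continuity. The key object is the via family $\Ga(a,b\mid c)$, and two facts drive the proof. First, every walk in $\Ga(a,b\mid c)$ is in particular a walk from $a$ to $b$, and the modulus of all walks from $a$ to $b$ equals $\Mod_p(\Ga(a,b))$ by \cite{ASGPC}; hence $\Ga$-monotonicity (Remark~\ref{rem:remarks}(b)) gives $\Mod_p(\Ga(a,b\mid c))\leq \Mod_p(\Ga(a,b))$, so that $d_p(a,b)\leq \Mod_p(\Ga(a,b\mid c))^{-1/p}$. It therefore suffices to bound the right-hand side by $d_p(a,c)+d_p(c,b)$, i.e.\ to produce a \emph{lower} bound on the modulus of the via family.

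This lower bound comes from a scaling argument built on Lemma~\ref{lem:shpath}. Let $\rho^*$ be the extremal density for $\Ga(a,b\mid c)$ (which exists for $1<p<\infty$ by Remark~\ref{rem:remarks}(c)), normalized so that the admissibility constraint is active: $\ell_{\rho^*}(\Ga(a,b\mid c))=1$. Writing $s:=\ell_{\rho^*}(\Ga(a,c))$ and $r:=\ell_{\rho^*}(\Ga(c,b))$, Lemma~\ref{lem:shpath} gives $s+r=1$. When $s>0$ the density $s^{-1}\rho^*$ is admissible for $\Ga(a,c)$, so $\Mod_p(\Ga(a,c))\leq s^{-p}\Mod_p(\Ga(a,b\mid c))$; raising to the power $-1/p$ (which reverses the inequality) rearranges to $s\,\Mod_p(\Ga(a,b\mid c))^{-1/p}\leq d_p(a,c)$, and this holds trivially when $s=0$. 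The symmetric statement gives $r\,\Mod_p(\Ga(a,b\mid c))^{-1/p}\leq d_p(c,b)$. Adding the two and using $s+r=1$ yields $\Mod_p(\Ga(a,b\mid c))^{-1/p}\leq d_p(a,c)+d_p(c,b)$, which combined with the first fact completes the triangle inequality for $1<p<\infty$.

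Finally, I would pass to the endpoints: since $d_p(a,b)\to d_1(a,b)$ as $p\to 1$ and $d_p(a,b)\to d_\infty(a,b)$ as $p\to\infty$ (Theorem~\ref{thm:generalize}(i) and the continuity noted after the definition of $d_p$), the triangle inequality—being a closed condition—survives in the limit, giving the metric property at $p=1$ and $p=\infty$. The stronger ultrametric claim for $d_1$ follows from Theorem~\ref{thm:generalize}(ii), which identifies $d_1=\MC^{-1}$, together with the ultrametric computation for $d_{\MC}$ already carried out in Section~\ref{sec:intro}. The main obstacle is getting the direction of the inequalities right: because $d_p$ is an inverse power of modulus, the argument requires a lower bound on the via-family modulus, and it is precisely the $p$-th root that converts the homogeneity $\cE_p(t\rho)=t^p\cE_p(\rho)$ into the linear split $s+r=1$ which makes the two scaled bounds add correctly.
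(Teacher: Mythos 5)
Your proof is correct and takes essentially the same route as the paper's: the same via-family decomposition via Lemma~\ref{lem:shpath}, the same rescaling of the extremal density $\rho^*$ into admissible densities for $\Ga(a,c)$ and $\Ga(c,b)$ followed by summing and applying $\Ga$-monotonicity. The only cosmetic differences are that you fold the degenerate case $s=0$ into the general bound and handle $p=1,\infty$ by passing to the limit, where the paper instead invokes Theorem~\ref{thm:generalize}(i)--(ii) directly; both are fine.
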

\begin{proof}
  That $d_1$ is an ultrametric is a consequence of
  Theorem~\ref{thm:generalize}(ii) and the fact that the reciprocal of
  minimum cut is an ultrametric, while the fact that $d_\infty$ is a
  metric is a consequence of Theorem~\ref{thm:generalize}(i).

  For $1<p<\infty$, we begin by verifying properties (i)-(iii) in
  Definition \ref{def:metric}. Since modulus is the infimum of a
  non-negative energy, non-negativity holds.  If $a=b$ the connecting
  family $\Ga(a,a)$ contains the constant walk, and then no density
  can be admissible, so the $p$-modulus of $\Ga(a,a)$ is infinity and
  $d_p(a,a)=0$. Conversely, if $a\neq b$, consider the constant
  density $\rho_0\equiv 1$. Then $\ell_0:=\ell_{\rho_0}(\Ga(a,b))$ is
  the shortest-path distance from $a$ to $b$, hence $\ell_0<\infty$
  since $G$ is connected. This implies that the density
  $\rho_1:=\rho_0/\ell_0$ is admissible for $\Ga(a,b)$. Therefore, for
  $p\in [1,\infty)$,
\[
\Mod_p(\Ga(a,b))\leq \cE_p(\rho_1)=\frac{|E|}{\ell_0^p}<\infty,
\]
and $\Mod_\infty(\Ga(a,b))\leq \ell_0^{-1}$, showing that
$d_p(a,b)>0$. Finally, since every path from $a$ to $b$ can be
reversed to a path from $b$ to $a$, it follows that
$\Adm(\Ga(a,b))=\Adm(\Ga(b,a))$, so symmetry holds as well.

It remains to prove the triangle inequality. Without loss of
generality we can assume that $a,b,c\in V$ are distinct.  Let
$\Ga(a,b), \Ga (a,c), \Ga(c,b)$ be the corresponding families of
connecting walks and let $\Ga(a,b\mid c)$ be the family of walks from
$a$ to $b$ via $c$.  Let $\rho^*\in\Adm(\Ga(a,b\mid c))$ be extremal
for $\Mod_p(\Ga(a,b\mid c))$.  Then by Lemma \ref{lem:shpath} and
extremality:
\begin{equation}\label{eq:sumell}
1=\ell_{\rho^*}(\Ga(a,b\mid c)=\ell_{\rho^*}(\Ga (a,c))+\ell_{\rho^*}(\Ga(c,b)). 
\end{equation}

We now consider two possibilities.  First, suppose that
$\ell_{\rho^*}(\Ga (a,c))>0$ and $\ell_{\rho^*}(\Ga(c,b))>0,$ and
define
\[
\rho_1:=\frac{\rho^*}{\ell_{\rho^*}(\Ga(a,c))}\qquad\text{and}\qquad \rho_2:=\frac{\rho^*}{\ell_{\rho^*}(\Ga(c,b))}.
\]
Then $\rho_1\in\Adm(\Ga(a,c))$ and $\rho_2\in\Adm(\Ga(c,b)).$
Writing $\Ga_1:=\Ga(a,c)$, $\Ga_2:=\Ga(c,b)$, and $\Ga:=\Ga(a,b\mid c)$, in order to simplify notation, we get
\begin{align*}
d_p(a,c)+d_p(c,b)& = 
\Mod_p(\Ga_1)^{-1/p}+\Mod_p(\Ga_2)^{-1/p} \\ & \geq \cE_p(\rho_1)^{-1/p}+\cE_p(\rho_2)^{-1/p}\\
& = \cE_p(\rho^*)^{-1/p}\left(\ell_{\rho^*}(\Ga_1)+\ell_{\rho^*}(\Ga_2)  \right)\\ 
&  = \cE_p(\rho^*)^{-1/p} = \Mod_p(\Ga)^{-1/p} \\
\end{align*}
where the second to last equality follows from (\ref{eq:sumell}).

On the other hand, suppose that, say, $\ell_{\rho^*}(\Ga(a,c))=0$.
Then~\eqref{eq:sumell} implies that $\ell_{\rho^*}(\Ga(c,b))=1$, which
implies that $\rho^*\in\Adm(\Ga(c,b))$.  Thus,
\begin{equation*}
  d_p(a,c) + d_p(c,b) \ge d_p(c,b) = \Mod_p(\Ga_2)^{-1/p}
  \ge \cE_p(\rho^*)^{-1/p} = \Mod_p(\Ga)^{-1/p},
\end{equation*}
and similarly if $\ell_{\rho^*}(\Ga(c,b))=0$ and
$\ell_{\rho^*}(\Ga(a,c))=1$.
 
Now we use the $\Ga$-monotonicity of modulus. Note that
$\Ga=\Ga(a,b\mid c) \subset \Ga_0:= \Ga(a,b)$.  Thus,
$\Mod_p(\Ga)\leq \Mod_p(\Ga_0)$, hence
$\Mod_p(\Ga)^{-1/p}\geq \Mod_p(\Ga_0)^{-1/p}=d_p(a,b)$ and the
triangle inequality holds.
\end{proof}

\section{Snowflaking and Antisnowflaking}\label{sec:as}

As we saw in Remark \ref{rem:dtwo}, squaring the metric $d_2$ yields effective resistance, which is known to be a metric on any connected graph. Therefore, we now study the question of finding the largest exponents one can raise each $d_p$ metric to, while maintaining the property of being a metric on arbitrary connected graphs.

Given an arbitrary metric, \emph{snowflaking} provides an interesting
way to generate new metrics on the same set.  This procedure is
described by the following known fact.
\begin{fact}
  Let $d$ be a metric on $X$ and let $0<\epsilon<1$, then
  $d^{\epsilon}$ is also a metric on $X$.
\end{fact}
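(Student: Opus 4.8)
The plan is to verify the four metric axioms for $d^\ep$, and to observe at the outset that three of them transfer for free. The point is that $\phi(t)=t^\ep$ is a strictly increasing bijection of $[0,\infty)$ onto itself with $\phi(0)=0$. Consequently, non-negativity of $d^\ep$ is immediate from $d\ge 0$; non-degeneracy follows because $d^\ep(a,b)=0$ exactly when $d(a,b)=0$, i.e.\ exactly when $a=b$, using strict monotonicity and $\phi(0)=0$; and symmetry of $d^\ep$ is inherited directly from that of $d$. So all the content of the statement is concentrated in the triangle inequality.

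For the triangle inequality I would reduce it to a single real-variable estimate. Applying the triangle inequality for $d$ together with the monotonicity of $\phi$ gives
\[
d(a,b)^\ep \le \bigl(d(a,c)+d(c,b)\bigr)^\ep,
\]
so it suffices to prove the subadditivity bound $(x+y)^\ep\le x^\ep+y^\ep$ for all $x,y\ge 0$. Chaining this with the displayed inequality, taking $x=d(a,c)$ and $y=d(c,b)$, yields $d^\ep(a,b)\le d^\ep(a,c)+d^\ep(c,b)$, which is what we want.

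To establish subadditivity of $\phi$ I would invoke its concavity. Since $\phi''(t)=\ep(\ep-1)t^{\ep-2}<0$ for $t>0$ when $0<\ep<1$, the map $\phi$ is concave on $[0,\infty)$, and $\phi(0)=0$. The bound is trivial if $x=0$ or $y=0$, so assume $x,y>0$ and set $s=x+y$. Writing $x=\tfrac{x}{s}\,s+\tfrac{y}{s}\cdot 0$ and $y=\tfrac{y}{s}\,s+\tfrac{x}{s}\cdot 0$ and using concavity together with $\phi(0)=0$ gives
\[
\phi(x)\ge\tfrac{x}{s}\,\phi(s),\qquad \phi(y)\ge\tfrac{y}{s}\,\phi(s),
\]
and summing produces $\phi(x)+\phi(y)\ge\phi(s)=\phi(x+y)$, as desired.

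The only genuine obstacle is this subadditivity inequality; the rest is bookkeeping. A shorter alternative I could present instead is to normalize by $s=x+y$ and note that $t^\ep\ge t$ for $t\in[0,1]$ (because $t^{\ep-1}\ge 1$ there), whence $(x/s)^\ep+(y/s)^\ep\ge (x/s)+(y/s)=1$, and multiplying through by $s^\ep$ gives the claim. I would favor the concavity argument, since it makes transparent that the hypothesis $\ep<1$ is exactly what forces concavity, and hence exactly what keeps the triangle inequality alive.
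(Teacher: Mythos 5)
Your proof is correct. Note, however, that the paper offers no proof of this statement at all: it is presented as a ``known fact'' and used as a black box, so there is nothing to compare your argument against. What you supply is the standard argument --- reduce the triangle inequality to subadditivity of $\phi(t)=t^{\epsilon}$ on $[0,\infty)$, and derive subadditivity from concavity together with $\phi(0)=0$ --- and both your main derivation and the shorter normalization alternative ($t^{\epsilon}\ge t$ on $[0,1]$) are sound and complete.
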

In other words, raising a metric to a positive fractional power always
results in another metric.  This immediately leads one to ask the
following question.  Given some metric $d$ on $X$, is $d$ the
snowflaked version of some other metric?  In other words, does there
exist a $t>1$ such that $d^t$ is also a metric?  When such a $t$
exists, we shall call the resulting metric $d^t$ an
\textit{antisnowflaking} of $d$.

For finite $X$, the characterization of metrics that can be
antisnowflaked is straightforward.  Suppose $a$, $b$ and $c$ are
distinct points in $X$.  If $d(a,b) < d(a,c) + d(c,b)$, then the
inequality also holds with $d$ replaced by $d^t$ for sufficiently
small $t>1$.  We call such a triple of points $(a,b,c)$ a \emph{proper
  triangle}.  On the other hand, if $d(a,b) = d(a,c) + d(c,b)$, then
it can be seen that $d^t$ violates the triangle inequality for
arbitrarily small $t>1$.  We refer to such a triple as a \emph{flat
  triangle}.  Since a finite set $X$ contains a finite number of
triangles, the following theorem is evident.
\begin{theorem}
  Let $d$ be a metric on a finite set $X$.  There exists a $t>1$ such
  that $d^t$ is a metric on $X$ if and only if $(X,d)$ contains no
  flat triangles.
\end{theorem}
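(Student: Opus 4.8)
The plan is to prove the two implications separately, after first disposing of the easy axioms. For any exponent $t>0$ the map $x\mapsto x^t$ is nonnegative, vanishes only at $x=0$, and is strictly increasing on $[0,\infty)$; hence $d^t$ automatically inherits non-negativity, non-degeneracy, and symmetry from $d$. The whole question therefore collapses to the triangle inequality, and the two directions of the stated equivalence amount to understanding exactly when raising to a power $t>1$ preserves it.

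For the forward direction I would argue the contrapositive: if $(X,d)$ contains a flat triangle $(a,b,c)$, then no $t>1$ can succeed. By definition $d(a,b)=d(a,c)+d(c,b)$ with the three points distinct, so setting $u=d(a,c)>0$ and $v=d(c,b)>0$ the relevant instance of the triangle inequality for $d^t$ becomes $(u+v)^t\le u^t+v^t$. The decisive elementary fact is the strict superadditivity of $x\mapsto x^t$ for $t>1$, namely $(u+v)^t>u^t+v^t$ whenever $u,v>0$. I would verify this by fixing $v$ and examining $f(u)=(u+v)^t-u^t-v^t$: one has $f(0)=0$ and $f'(u)=t\big((u+v)^{t-1}-u^{t-1}\big)>0$ for $u>0$, since $t-1>0$ makes $x\mapsto x^{t-1}$ strictly increasing. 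Thus $f(u)>0$, the inequality on the triple fails for every $t>1$, and $d^t$ is never a metric.

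For the converse I would assume $(X,d)$ has no flat triangles, so that every ordered triple of distinct points is proper, i.e.\ satisfies the strict inequality $d(a,b)<d(a,c)+d(c,b)$. Fixing such a triple, I would introduce $g_{abc}(t):=d(a,c)^t+d(c,b)^t-d(a,b)^t$, which is continuous in $t$ and satisfies $g_{abc}(1)>0$ by properness; continuity from the right then yields an $\epsilon_{abc}>0$ with $g_{abc}(t)>0$ for all $t\in[1,1+\epsilon_{abc}]$. The main point, and the only place where the hypothesis that $X$ is finite is essential, is the uniformization step: passing from these per-triangle thresholds to a single exponent valid for all of them at once. Since $X$ is finite there are only finitely many such triples, so $\epsilon:=\min_{(a,b,c)}\epsilon_{abc}>0$, and for any $t\in(1,1+\epsilon]$ every instance of the triangle inequality for $d^t$ holds simultaneously (triples with a repeated point being trivial). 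Choosing any such $t>1$ exhibits $d^t$ as a metric. The anticipated obstacle is precisely this uniform choice of $\epsilon$: without finiteness one could have $\inf_{(a,b,c)}\epsilon_{abc}=0$, which is exactly why the theorem is stated for finite $X$.
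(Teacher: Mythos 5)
Your proof is correct and follows the same route the paper takes (the paper only sketches it, declaring the theorem ``evident''): flat triangles are obstructed by the strict superadditivity of $x\mapsto x^t$ for $t>1$, proper triangles survive small $t>1$ by continuity, and finiteness of $X$ yields a uniform exponent. You have simply filled in the details the paper leaves implicit.
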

With this in mind, we make the following definition.
\begin{definition}
The {\it antisnowflaking exponent} of a metric $d$ is defined as
\[
\ASFE(d) := \sup \{ t\ge 1: \text{ $d^t$ is a metric} \}
\]
\end{definition}
For instance, it is clear that when $d$ is an ultrametric, then
$\ASFE(d)=\infty$.  While the antisnowflaking exponent of a particular
metric on a particular graph may be interesting in certain contexts,
here we will focus on the best antisnowflaking exponent for an entire
family of connected graphs. Writing $d_p=d_{p,G}$ to show the dependence
on the graph $G$, we define
\begin{equation}\label{eq:antisno}
s(p):=\inf\{\ASFE(d_{p,G}): G\text{ connected}\}.
\end{equation}
Note that if we find a connected graph $G$, an exponent $t\ge 1$, and
three nodes $a,b,c$, such that the triangle inequality for $d_p^t$
fails for this triple, then we are guaranteed that $s(p)\leq t$.  In
particular, by looking at the path graph $P_3$ on three nodes (Figure
\ref{fig:pathgraph}) we can establish the following bound.
\begin{proposition}\label{prop:snowflakeexp}
For $1<p<\infty$:
\begin{equation}\label{eq:snowflakeexp}
s(p)\leq \frac{p}{p-1}=:q
\end{equation}
where $q$ is the H\"{o}lder exponent associated with $p$.

Moreover, the bound is attained for $p=1,2,\infty$:
\[s(1)=\infty,  \qquad s(2)=2, \qquad s(\infty)=1.\]
\end{proposition}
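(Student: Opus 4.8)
The plan is to separate the argument into a general upper bound valid for all $1<p<\infty$ and three exact computations at the endpoints $p=1,2,\infty$. For the upper bound I would use the observation recorded just before the proposition: to conclude $s(p)\le t$ it suffices to exhibit one connected graph, one triple of nodes, and one exponent for which the triangle inequality for $d_p^t$ fails. Following the hint I take $G=P_3$ on vertices $a,c,b$ with edges $\{a,c\}$ and $\{c,b\}$, and read off the relevant modulus values from Example~\ref{ex:basic}. On $P_3$ the family $\Ga(a,c)$ (and likewise $\Ga(c,b)$) is a single one-hop path, so $\Mod_p=1$ and $d_p(a,c)=d_p(c,b)=1$; while $\Ga(a,b)$ is a single two-hop path, so $\Mod_p(\Ga(a,b))=2^{-(p-1)}$ and hence $d_p(a,b)=2^{(p-1)/p}=2^{1/q}$.

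With these three values in hand, the triangle inequality for $d_p^t$ applied to the triple $(a,b,c)$ becomes $2^{t/q}\le 2$, which holds exactly when $t\le q$. Thus $d_{p,P_3}^t$ fails the triangle inequality for every $t>q$, so $\ASFE(d_{p,P_3})=q$, and by the definition of the infimum in~\eqref{eq:antisno} this gives $s(p)\le q$ for all $1<p<\infty$.

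For the endpoint values I would show the $P_3$ upper bound is met by a matching lower bound in each case. At $p=1$: by Theorem~\ref{thm:main} the metric $d_1$ is an ultrametric on every connected $G$, and an ultrametric has $\ASFE=\infty$, so $s(1)=\infty$, consistent with $q=p/(p-1)\to\infty$ as $p\to1$. At $p=\infty$: since $d^1=d$ is always a metric one has $\ASFE\ge 1$ for every $G$, whence $s(\infty)\ge 1$; combined with the $P_3$ computation, in which $d_\infty$ is the shortest-path metric and $(a,b,c)$ is a flat triangle forcing $t\le 1$, this yields $s(\infty)=1=\lim_{p\to\infty}q$. At $p=2$, where $q=2$, I would invoke Remark~\ref{rem:dtwo}: since $d_2^2=\cReff$ is a metric on every connected graph, the value $2$ lies in the set defining $\ASFE(d_{2,G})$, so $\ASFE(d_{2,G})\ge 2$ for all $G$ and hence $s(2)\ge 2$, matching the upper bound to give $s(2)=2$.

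The computations are elementary once $P_3$ is chosen, so there is no genuine analytic obstacle here; the only point needing care is the bookkeeping of the modulus values on $P_3$ together with the identity $(p-1)/p=1/q$, which is precisely what makes the flat-triangle threshold land exactly at the H\"older exponent. The endpoint attainment claims then follow immediately from facts already established in the excerpt, namely the ultrametric property of $d_1$ and the fact that effective resistance is a metric, so the substantive content of the proof is entirely the $P_3$ upper bound.
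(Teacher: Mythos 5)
Your proposal is correct and follows essentially the same route as the paper: the $P_3$ computation of $d_p(a,c)=d_p(c,b)=1$ and $d_p(a,b)=2^{(p-1)/p}$ yielding the threshold $t=q$, then the ultrametric property of $d_1$ for $s(1)=\infty$, the fact that $d_2^2=\cReff$ is a metric for $s(2)=2$, and the flat triangle of the shortest-path metric on $P_3$ for $s(\infty)=1$. The only cosmetic difference is that you read the modulus values off Example~\ref{ex:basic} rather than recomputing the extremal density directly, which changes nothing of substance.
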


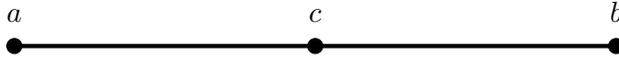
\begin{figure}[h!]
\center
\begin{tikzpicture}[scale=4] 
\draw  [ultra thick]  (0,0) --(1,0);\draw  [ultra thick]  (1,0) --(2,0);

\node [above] at (0,0.05) {\large $a$};\node [above] at (1,0.05) {\large $c$};\node [above] at (2,0.05) {\large $b$};
\draw   [fill] (0,0) circle [radius=0.025];\draw   [fill] (1,0) circle [radius=0.025];\draw   [fill] (2,0) circle [radius=0.025];
\end{tikzpicture}
\caption{The path graph $P_3$ on three nodes.} \label{fig:pathgraph}
\end{figure}

\begin{proof}
  Consider the path graph $P_3$ with nodes $a,c,b$ and fix
  $p\in (1,\infty)$.  It is clear that $d_p(a,c) = 1$, because to be
  admissible a density $\rho$ must satisfy $\rho(a,c)=1$, and then in
  order to minimize the energy, we must also have
  $\rho(c,b)=0$. Likewise, $d_p(c,b) =1$.  For $ d_p(a,b)$, the energy
  is minimized when $\rho(a,c)=\rho(c,b)=1/2$. Thus,
\[
\Mod_p(a,b) = (1/2)^p + (1/2)^p = 2^{1-p}
\]
Hence, $ d_p(a,b) = 2^{(p-1)/p} = 2^{(1-1/p)}$.  The triangle
inequality will fail for $t\ge 1$ such that
\begin{equation}\label{eq:contra}
  d_p(a,b)^t > d_p(a,c)^t + d_p(c,b)^t
\end{equation}
that is,
\[
2^{t(1-1/p)} > 1+1 =2
\]
This happens whenever $t > 1/(1-1/p)$. So $s(p)\le p/(p-1)$.

The bound is attained for the case $p=1$ because, as shown in Theorem
\ref{thm:main}, $d_1$ is an ultrametric on any connected graph, so
$s(1)=\infty$.  When $p=2$, the metric $d_2^2$ is effective
resistance $\cReff$, which is also a metric on connected
graphs. Therefore, $s(2)\ge 2$, attaining the upper bound.  For the
case $p=\infty$, $d_\infty(a,c)=d_\infty(c,b)=1$, while
$d_\infty(a,b)=2$, yielding a flat triangle.  Thus, $s(\infty)=1$.
\end{proof}
In fact, based on the numerical evidence presented in
Section~\ref{sec:num}, we make the following conjecture.
\begin{conjecture}\label{conj:main}
  For all $p\in[1,\infty]$,
  \[
  s(p)=\frac{p}{p-1}
  \]
  Namely, that $P_3$ demonstrates the the worst-case behavior.
\end{conjecture}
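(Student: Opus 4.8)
The plan is to treat the upper bound and the three attainment cases separately, exploiting the fact that $s(p)$ is an \emph{infimum} over all connected graphs. This asymmetry works in our favor for the upper bound $s(p)\le q$: it suffices to exhibit a single connected graph on which $\ASFE(d_{p,G})\le q$. As the remark preceding the statement records, if the triangle inequality for $d_p^t$ fails on some graph for some triple of nodes, then $s(p)\le t$; so I only need one good witness, and I would take the path graph $P_3$ of Figure~\ref{fig:pathgraph}.

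On $P_3$ the modulus computations are immediate. The family $\Ga(a,c)$ consists of the single edge $\{a,c\}$, so an admissible density must charge at least $1$ to that edge and nothing is gained by charging the other; hence $\Mod_p(\Ga(a,c))=1$ and $d_p(a,c)=1$, and symmetrically $d_p(c,b)=1$. The family $\Ga(a,b)$ is the single two-edge path, whose admissibility constraint is $\rho(\{a,c\})+\rho(\{c,b\})\ge 1$; minimizing $\rho_1^p+\rho_2^p$ subject to this is a one-line convexity argument giving the symmetric minimizer $\rho_1=\rho_2=1/2$, so $\Mod_p(\Ga(a,b))=2^{1-p}$ and $d_p(a,b)=2^{1-1/p}$. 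Substituting into the triangle inequality for $d_p^t$ and asking when it is violated reduces to the scalar condition $2^{t(1-1/p)}>2$, i.e. $t>p/(p-1)=q$. Thus $d_p^t$ is not a metric on $P_3$ for any $t>q$, giving $\ASFE(d_{p,P_3})\le q$ and hence $s(p)\le q$.

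For attainment I would invoke the structural facts already established. When $p=1$, Theorem~\ref{thm:main} says $d_1$ is an ultrametric on every connected graph; since raising an ultrametric to any power $t\ge 1$ preserves the ultrametric inequality, we get $\ASFE(d_{1,G})=\infty$ for all $G$ and so $s(1)=\infty$, matching $q$ at $p=1$. When $p=\infty$, the same computation on $P_3$ (now $d_\infty$ is the hop-count shortest-path metric via Theorem~\ref{thm:generalize}(i)) produces the \emph{flat} triangle $d_\infty(a,b)=2=d_\infty(a,c)+d_\infty(c,b)$, which is destroyed by any $t>1$; together with the trivial bound $\ASFE\ge 1$ this forces $s(\infty)=1$. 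The case $p=2$ carries the real content: the upper bound gives $s(2)\le 2$, and for the matching lower bound I would use the identity $d_2^2=\cReff$ from Remark~\ref{rem:dtwo} together with the fact that effective resistance is a metric on \emph{every} connected graph, yielding $\ASFE(d_{2,G})\ge 2$ uniformly in $G$ and hence $s(2)\ge 2$.

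The main obstacle is neither the upper bound nor the $p=1,\infty$ cases, all of which are elementary once the $P_3$ computation is in hand; it is the \emph{universality} needed for the lower bound. Proving $s(2)\ge 2$ means showing $d_2^2$ is a metric on every connected graph simultaneously, and this rests entirely on the nontrivial input that $\cReff$ is a graph metric---most cleanly seen through the positive-semidefinite pull-back representation $d_2(a,b)=\|\cG^{1/2}\ones_a-\cG^{1/2}\ones_b\|_2$ recorded in Remark~\ref{rem:dtwo}. I expect any attempt to push beyond $p\in\{1,2,\infty\}$ toward the full Conjecture~\ref{conj:main} to founder precisely here: one would need a uniform lower bound $\ASFE(d_{p,G})\ge q$ across all graphs $G$, and the modulus machinery does not obviously supply the convexity or Hilbert-space embedding structure that makes the $p=2$ case succeed.
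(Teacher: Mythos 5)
What you have written is, in substance, a correct proof of Proposition~\ref{prop:snowflakeexp} --- the upper bound $s(p)\le p/(p-1)$ via the $P_3$ witness, together with attainment at $p=1,2,\infty$ --- and it tracks the paper's own argument for that proposition almost line by line: the same computation $\Mod_p(\Ga(a,b))=2^{1-p}$ on $P_3$, the same appeal to $d_1$ being an ultrametric from Theorem~\ref{thm:main}, to $d_2^2=\cReff$ being a metric from Remark~\ref{rem:dtwo}, and to the flat triangle for $d_\infty$. But the statement you were asked to prove is Conjecture~\ref{conj:main}, which asserts the \emph{equality} $s(p)=p/(p-1)$ for every $p\in[1,\infty]$. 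The content of that claim beyond the proposition is the lower bound $s(p)\ge q$ with $q=p/(p-1)$ for $p\notin\{1,2,\infty\}$, i.e.\ that $d_{p,G}^{q}$ satisfies the triangle inequality on \emph{every} connected graph $G$. That is exactly the step you flag at the end as the one you cannot carry out, so the proposal has a genuine, self-acknowledged gap: for general $p$ it establishes only one of the two inequalities.

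To be fair, the paper does not prove this statement either --- it is presented as a conjecture, supported by the Erd\H{o}s--R\'enyi experiments of Section~\ref{sec:num} and by the cycle-graph computation of Section~\ref{sec:biconnected}, which shows the bound is asymptotically sharp even within the class of biconnected graphs. The ``Added in proof'' note records that the missing lower bound was subsequently established in \cite{acfpc} via Fulkerson (blocking) duality for $p$-modulus. That is precisely the structural input you correctly identify as absent from your toolkit: neither the Hilbert-space pull-back that handles $p=2$ nor the ultrametric structure at $p=1$ generalizes, whereas the duality between the connecting family $\Ga(a,b)$ and its blocking family does, and it is what ultimately yields the uniform estimate $\ASFE(d_{p,G})\ge q$ over all connected $G$.
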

In order to prove Conjecture \ref{conj:main}, it is enough to show
that $d_p^q$ is always a metric on connected graphs, thus attaining
the upper bound of Proposition~\ref{prop:snowflakeexp}. We already
know this is true for $p=1,2,\infty$.

\noindent{\bf Added in proof:} Conjecture \ref{conj:main} has been answered in the affirmative, using the theory of Fulkerson duality for modulus, in \cite{acfpc}.

\section{Examples and numerical Results}\label{sec:num}
\subsection{Erd\H{o}s-R\'enyi graphs}
As an attempt to numerically test Conjecture~\ref{conj:main}, we
produced $50$ Erd\H{o}s-R\'enyi graphs on $10$ nodes, with expected
average degree $6$ (discarding any disconnected graphs that were
generated).  For each graph $G_i$, $i=1,2,\ldots,50$, we computed
$d_{p,G_i}(1,2), d_{p,G_i}(2,3),$ and $d_{p,G_i}(1,3)$ for a range of
$p$ values and determined the value $t_{p,i}$ such that
\begin{equation*}
  d^{t_{p,i}}_{p,G_i}(1,2) = d^{t_{p,i}}_{p,G_i}(1,3) + d^{t_{p,i}}_{p,G_i}(2,3).
\end{equation*}
We then estimated $s(p)$ as
\begin{equation*}
  s(p) \le t(p) = \min_{i=1,2,\ldots,50}t_{p,i}.
\end{equation*}
The resulting bound is shown in Figure \ref{fig:erdosreniy} in
blue. The red line in the same figure is the conjectured
antisnowflaking exponent $s(p)=\frac{p}{p-1}$.

\begin{figure}[h!]
\centering
  \includegraphics[trim={0cm 7cm 0cm 7cm},clip, width= 0.9\linewidth]{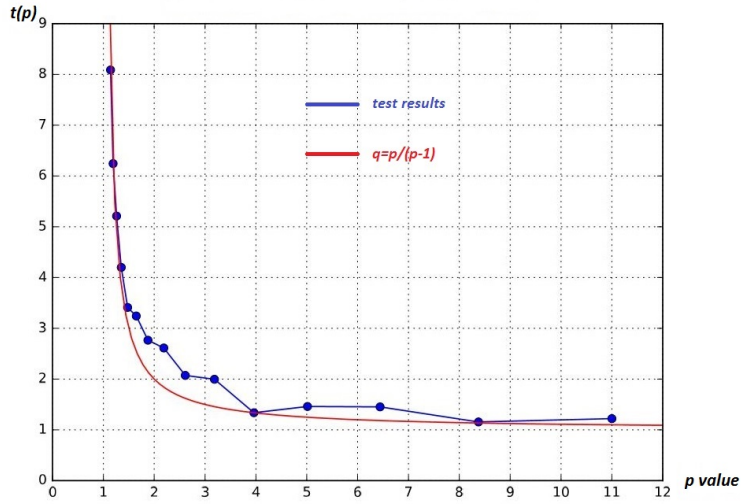}
\caption{Antisnowflaking exponent for different $p$ values.}\label{fig:erdosreniy}
  \end{figure}

Observe that the blue line never goes below the red line. If the blue line had dipped under the red line, that would have been a counter-example for Conjecture \ref{conj:main}. In other words, the worst case scenario seems to be $s(p)=\frac{p}{p-1}$.

In the following we compute some specific examples. When calculating modulus, we will often just write down the extremal metric. For simple examples, verifying that a metric $\rho$ is extremal for $p$-modulus can be done using Beurling's criterion. We state the criterion here for the reader's convenience. For a proof, see \cite[Theorem 2.1]{apc}. 
\begin{theorem}[Beurling's Criterion for Extremality]
\label{thm:beurling}
Let $G$ be a simple graph, $\Ga$ a family of walks on $G$, and $1<p<\infty$.  
Then, a density $\rho\in \Adm(\Ga)$ is extremal for $\Mod_p(\Ga)$, if there is a subfamily $\tilde{\Ga}\subset\Ga$ with $\ell_\rho(\ga)=1$ for all $\ga\in \tilde{\Ga}$, such that for all $h\in\R^E$:
\begin{equation}
\label{eq:beurling}
\mbox{$\sum_{e\in E}\cN(\ga,e)h(e)\geq 0$,\quad for all $\ga\in\tilde{\Ga}$}\quad\Longrightarrow\quad\sum_{e\in E}h(e)\rho^{p-1}(e)\geq 0.
\end{equation}
\end{theorem}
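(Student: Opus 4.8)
The plan is to recognize the $p$-modulus problem as a smooth convex optimization problem and to show that the stated condition \eqref{eq:beurling} is exactly the first-order (KKT/variational) optimality condition, specialized to the admissible density $\rho$. Fix $1<p<\infty$ and recall that $\Mod_p(\Ga)=\inf_{\eta\in\Adm(\Ga)}\cE_p(\eta)$, where $\cE_p(\eta)=\sum_{e}\eta(e)^p$ is strictly convex and $\Adm(\Ga)=\{\eta\ge 0:\ell_\eta(\ga)\ge 1\ \forall\ga\in\Ga\}$ is a closed convex set cut out by the linear constraints $\sum_e\cN(\ga,e)\eta(e)\ge 1$. Since $1<p<\infty$, the minimizer exists and is unique (Remark~\ref{rem:remarks}(c)); the goal is to prove that the particular $\rho$ in the hypothesis \emph{is} that minimizer.

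First I would argue by convexity: because $\cE_p$ is convex and differentiable, a feasible $\rho$ is the global minimizer if and only if the directional derivative of $\cE_p$ at $\rho$ is nonnegative along every feasible direction, i.e. along every $v=\eta-\rho$ with $\eta\in\Adm(\Ga)$. A direct computation gives
\begin{equation*}
\frac{d}{dt}\Big|_{t=0^+}\cE_p(\rho+tv)=p\sum_{e\in E}\rho(e)^{p-1}v(e),
\end{equation*}
so extremality of $\rho$ is equivalent to the statement that $\sum_e\rho^{p-1}(e)\,v(e)\ge 0$ for every admissible perturbation direction $v$. The main work is then to show that the hypothesis supplies exactly these directions. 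Let $h\in\R^E$ satisfy the left-hand side of \eqref{eq:beurling}, namely $\sum_e\cN(\ga,e)h(e)\ge 0$ for all $\ga\in\tilde\Ga$. The idea is that $\rho+th$ remains feasible for the constraints indexed by $\tilde\Ga$ to first order: since $\ell_\rho(\ga)=1$ on $\tilde\Ga$, we get $\ell_{\rho+th}(\ga)=1+t\sum_e\cN(\ga,e)h(e)\ge 1$ for small $t>0$. The hypothesis \eqref{eq:beurling} then forces $\sum_e\rho^{p-1}(e)h(e)\ge 0$, which is precisely the first-order optimality inequality along $h$.

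The step I expect to be the main obstacle is passing from the tight subfamily $\tilde\Ga$ to \emph{all} of $\Adm(\Ga)$, and handling the nonnegativity constraint $\eta\ge 0$. The point is that feasible directions $v=\eta-\rho$ need only keep the \emph{binding} constraints satisfied to first order; constraints with $\ell_\rho(\ga)>1$ have slack and impose no first-order restriction, while the active constraints are those in a tight subfamily. Thus it suffices to test against directions respecting $\tilde\Ga$, and the hypothesis certifies optimality against all such directions. I would make this rigorous by invoking the standard KKT characterization for convex programs (or, equivalently, by a clean first-variation argument comparing $\cE_p(\rho)$ with $\cE_p(\eta)$ directly via the convexity inequality $\cE_p(\eta)\ge\cE_p(\rho)+p\sum_e\rho^{p-1}(e)(\eta(e)-\rho(e))$ and then bounding the linear term below using admissibility of $\eta$ against the tight family). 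The care needed is to ensure the linear functional $h\mapsto\sum_e\rho^{p-1}(e)h(e)$ lies in the cone generated by the active constraint gradients $\{\cN(\ga,\cdot):\ga\in\tilde\Ga\}$ together with the sign constraints, which is the geometric content of Beurling's criterion and the heart of the argument.
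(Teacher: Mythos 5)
Your proposal is essentially correct, and it is worth noting at the outset that the paper itself contains no proof of this theorem: it defers to \cite[Theorem 2.1]{apc}. The proof given there runs through exactly the step buried in your parenthetical remark. For an arbitrary $\eta\in\Adm(\Ga)$, the vector $h=\eta-\rho$ satisfies the premise of \eqref{eq:beurling}, since for every $\ga\in\tilde\Ga$ one has $\sum_e\cN(\ga,e)h(e)=\ell_\eta(\ga)-\ell_\rho(\ga)\ge 1-1=0$ by admissibility of $\eta$ and tightness of $\rho$ on $\tilde\Ga$; the hypothesis then yields $\sum_e\eta(e)\rho(e)^{p-1}\ge\sum_e\rho(e)^p$, and the reference concludes with H\"older's inequality, $\sum_e\eta(e)\rho(e)^{p-1}\le\cE_p(\eta)^{1/p}\cE_p(\rho)^{(p-1)/p}$, giving $\cE_p(\rho)\le\cE_p(\eta)$. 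Your alternative finish via the first-order convexity inequality, $\cE_p(\eta)\ge\cE_p(\rho)+p\sum_e\rho(e)^{p-1}(\eta(e)-\rho(e))\ge\cE_p(\rho)$, is an equally short and equally valid substitute for H\"older; the two proofs are otherwise identical.

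Two cautions about your exposition. First, your middle paragraph argues in the unhelpful direction: you take $h$ satisfying the premise of \eqref{eq:beurling} and observe that $\rho+th$ stays feasible for the $\tilde\Ga$-constraints. What the proof actually requires is the reverse containment --- that every genuinely feasible direction $v=\eta-\rho$ satisfies the premise --- which, as above, is immediate. Second, the ``main obstacle'' you anticipate (showing that $e\mapsto\rho(e)^{p-1}$ lies in the cone of active constraint normals, handling the sign constraints, invoking KKT) is a red herring for this statement: the theorem asserts only a \emph{sufficient} condition for extremality, so no Farkas/KKT-type representation is needed, and the nonnegativity constraints on $\eta$ never enter the argument. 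The two-line convexity computation you relegate to parentheses is already the complete proof; the cone-membership question is the content of the converse (necessity) direction, which is not claimed here.
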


\subsection{Biconnected graphs}\label{sec:biconnected}
Next, we explore the anti-snowflaking exponent for more restrictive families of graphs.
If $\cG$ is a family of connected graphs, define $s_\cG(p):=\inf_{G\in \cG}ASFE(d_{p,G})$.
Clearly $s(p)\le s_{\cG}(p)$. Recall that $s(p)$ is an infimum over all connected graphs. What happens if we restrict the infimum to biconnected graphs?
\begin{definition}
A {\it biconnected graph} is a graph that remains connected after removing any node.
\end{definition}
Let $\cB$ be the family of all biconnected simple graphs. If Conjecture \ref{conj:main} holds, then
from the proof of Proposition \ref{prop:snowflakeexp}, we see that the family of path graphs $\cP$ satisfies
$s(p)=s_{\cP}(p)$ for all $p's$. However, path graphs are not biconnected. So it is natural to wonder if $s(p)<s_{\cB}(p)$. If Conjecture \ref{conj:main} holds, the answer is no.  We establish this by looking at the simplest example of a biconnected graph, namely the cycle graph $C_N$. For distinct nodes $a$, $b$ and $c$, as in Figure \ref{fig:cycle}, consider the connecting families $\Ga(a,b),\Ga(a,c),\Ga(c,b)$.

\begin{figure}[h!]
\begin{minipage}{.45\linewidth}
\begin{flushleft}
\begin{tikzpicture}[scale=0.45] 
\draw   [fill] (0,0) circle [radius=0.1];

\draw   [fill] (2.5,6) circle [radius=0.2];
\draw   [fill] (0,6.5) circle [radius=0.2];
\draw   [fill] (-5.7,3.2) circle [radius=0.2];
\draw   [fill] (-4.25,4.9) circle [radius=0.2];
\draw   [fill] (-6.40,1.1) circle [radius=0.2];
\draw   [fill] (-2.3,6.1) circle [radius=0.2];
\draw   [fill] (-6.45,-1.1) circle [radius=0.2];

\node   [above] at (1.5,6.4)  {\small $\frac{1}{N-1}$};
\node  [above]  at (-1.4,6.5) {\small $\frac{1}{N-1}$};
\node  [right] at (-5.5,3) {\large  a};
\node  [right] at (-4.25,4.5) {\large  c};
\node [right] at (-2.3,5.7) {\large b};
\node  [above] at (-7.3,1.3) {\small $\frac{1}{N-1}$};
\node  [above] at (-7.4,-1.0) {\small $\frac{1}{N-1}$};
\node  [above] at (-5.5,3.8) {\small 1};
\node  [above] at (-4.25,5.3) {\small $\frac{1}{N-1}$};

\draw [thick] (2.5,6) arc [radius=6.5, start angle=67, end angle=190]; 
\draw [dashed, thick] (2.5,6) arc [radius=6.5, start angle=67, end angle=-170];
\end{tikzpicture}
\end{flushleft}
\end{minipage}
\hfill
\hspace{1.0cm}
\begin{minipage}{.45\linewidth}
\begin{flushright}
\begin{tikzpicture}[scale=0.45] 
\draw   [fill] (0,0) circle [radius=0.1];

\draw   [fill] (2.5,6) circle [radius=0.2];
\draw   [fill] (0,6.5) circle [radius=0.2];
\draw   [fill] (-5.7,3.2) circle [radius=0.2];
\draw   [fill] (-4.25,4.9) circle [radius=0.2];
\draw   [fill] (-6.4,1.1) circle [radius=0.2];
\draw   [fill] (-2.3,6.1) circle [radius=0.2];
\draw   [fill] (-6.45,-1.1) circle [radius=0.2];

\node   [above] at (1.5,6.4)  {\small $\frac{1}{N-2}$};
\node  [above]  at (-1.4,6.5) {\small $\frac{1}{N-2}$};
\node  [right] at (-5.5,3) {\large  a};
\node  [right] at (-4.25,4.5) {\large  c};
\node [right] at (-2.3,5.7) {\large b};
\node  [above] at (-7.10,1.5) {\small $\frac{1}{N-2}$};
\node  [above] at (-7.3,-1.0) {\small $\frac{1}{N-2}$};
\node  [above] at (-5.6,4.0) {\small $\frac{1}{2}$};
\node  [above] at (-4,5.4) {\small $\frac{1}{2}$};

\draw [thick] (2.5,6) arc [radius=6.5, start angle=67, end angle=190]; 
\draw [dashed, thick] (2.5,6) arc [radius=6.5, start angle=67, end angle=-170];
\end{tikzpicture}
\end{flushright}
\end{minipage}
\caption{The cycle graph $C_N$ and the extremal density $\rho^*$ for $\Ga(a,c)$ and $\Ga(a,b)$.}\label{fig:cycle}
\end{figure}
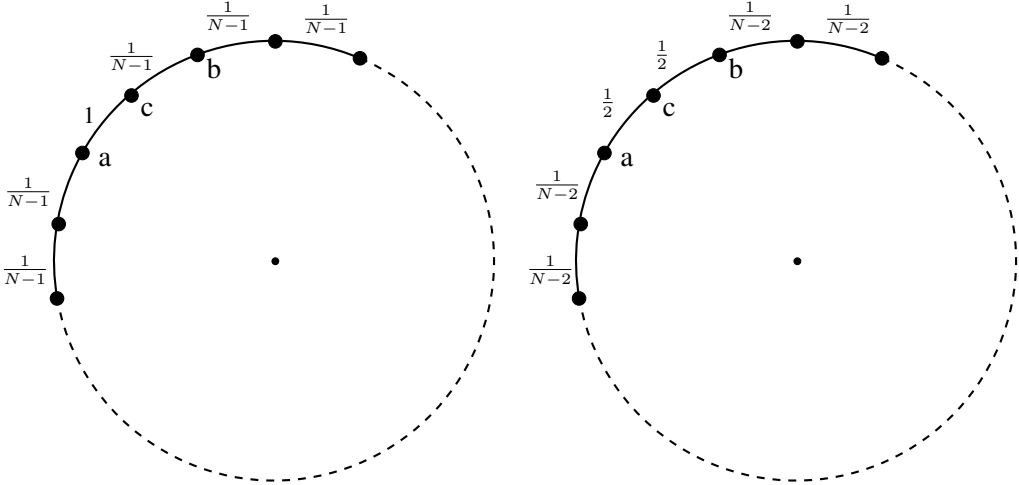

The middle diagram in Figure \ref{fig:cycle}  shows the extremal density $\rho^*$ for the family $\Ga(a,c)$.
The subfamily $\tilde{\Ga}$ that verifies Beurling's criterion in this case has two simple paths, namely $a\ c$ and the path from $a$ to $c$ that traverses the cycle in the other direction (the long way).
This gives
\begin{equation}\label{eq:modac}
\Mod_p(\Ga(a,c)) =1+\left(\frac{1}{N-1}\right)^p (N-1)
=  1 + (N-1)^{1-p}.
\end{equation}
By symmetry,  $\Mod_p(\Ga(c,b)) = 1 + (N-1)^{1-p}$ as well.
We conclude that 
\[
d_p(a,c)=d_p(c,b) =  (1 + (N-1)^{1-p})^{-1/p}.
\]
The right-most diagram in  Figure \ref{fig:cycle}   shows the extremal density $\rho^*$ for the family $\Ga(a,b)$. The subfamily $\tilde{\Ga}$ that verifies Beurling's criterion in this case has two simple paths, namely $a\ c\ b$ and the longer path from $a$ to $b$ in the other direction. As a consequence,
\begin{equation}\label{eq:modab}
\Mod_p(\Ga(a,b)) =2\left(\frac{1}{2}\right)^p +(N-2)\left(\frac{1}{N-2}\right)^p 
=  2^{1-p} + (N-2)^{1-p}.
\end{equation}
Therefore, $d_p(a,b) = (2^{1-p} + (N-2)^{1-p})^{-1/p}$.
We will calculate the infimum of all the exponents  $t \ge 1$ for which the following triangle inequality fails:
\[
d_p(a,b)^t > d_p(a,c)^t+d_p(c,b)^t.
\]
We get
\[
\left(2^{1-p} + (N-2)^{1-p}\right)^{-t/p} > 2(1 + (N-1)^{1-p})^{-t/p},
\]
hence
\[
 2^{p/t}<\frac{1 + (N-1)^{1-p}}{2^{1-p} + (N-2)^{1-p}}.
\]
So the  infimal exponent is
\begin{equation*}\label{eq:to}
t_0 := p\left[\log_2\left(\frac{1 + (N-1)^{1-p}}{2^{1-p} + (N-2)^{1-p}}\right)\right]^{-1}.
\end{equation*}

We see that as $ N \rightarrow \infty $, $t_0  \rightarrow \frac{p}{-(1-p)}= \frac{p}{(p-1)} = q $.
Therefore, if we let $\cB$ denote the family of biconnected graphs and define $s_\cB(p):=\inf_{G\in \cB}ASFE(d_{p,G})$, then we see that $s(p)\le s_{\cB}(p)\le p/(p-1)$, with equality in both places if Conjecture \ref{conj:main} holds.

\subsection{Complete graphs}
The {\it complete graph} $K_N$ is a simple graph on $N$ nodes, where every node is connected to each other, see Figure \ref{fig:complete}.

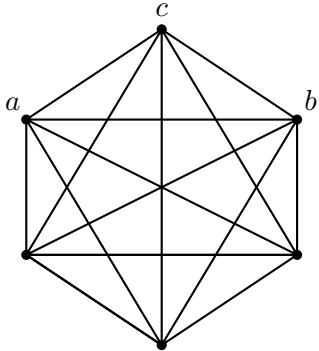
\begin{figure}[h!]
\center
\begin{tikzpicture}[scale = 0.6]
\draw  [thick]  (3,0) --(6,2);
\draw  [thick]  (6,2) --(6,5); 
\draw  [thick]  (6,5) --(3,7);
\draw  [thick]  (3,7) --(0,5);
\draw  [thick]  (0,2) --(0,5); 
\draw  [thick]  (0,2) --(3,0);
\draw   [thick] (0,2) --(6,2);\draw  [thick]  (0,2) --(6,5);\draw  [thick]  (0,2) --(3,7);\draw  [thick]  (3,0) --(0,5);\draw  [thick]  (3,0) --(3,7);
\draw  [thick]  (0,2) --(3,0);\draw  [thick]  (6,2) --(0,5);\draw  [thick]  (6,2) --(3,7);\draw  [thick]  (6,2) --(0,2);\draw  [thick]  (3,0) --(6,5);
\draw  [thick]  (0,5) --(6,5);
\draw   [fill] (3,0) circle [radius=0.1];\draw [fill] (6,2) circle [radius=0.1];\draw [fill] (6,5) circle [radius=0.1];\draw [fill] (3,7) circle [radius=0.1];\draw [fill] (0,5) circle [radius=0.1];\draw [fill] (0,2) circle [radius=0.1];
\node [above] at (-0.3,5) {\large $a$};
\node [above] at (6.3,5) {\large $b$};
\node [above] at (3,7.1) {\large $c$};
\end{tikzpicture}
\caption{$K_6$- Complete graph on 6 nodes.}\label{fig:complete}
\end{figure}

 Observe that, by symmetry, $\Mod_p(\Ga(a,c))=\Mod_p(\Ga(c,b)=\Mod_p(\Ga(a,b))$, hence $d_p(a,c)=d_p(c,b)=d_p(a,b)$.
Therefore, $d_p$ is an ultrametric on complete graphs. In particular, if $\cK$ is the family of complete graphs, then
$s_\cK(p)=\infty$ for all $p$.

It's still interesting to compute $d_p(a,b)$ for an arbitrary pair of nodes.
Figure \ref{fig:square} depicts the extremal density $\rho^*$ for  $\Ga(a,b)$ in  $K_N$.

\begin{figure}[h!]
\center
\begin{tikzpicture}[scale=0.55] 
\draw  [dashed, ultra thick]  (-6.5,0) --(2.5,6); \draw  [ultra thick]  (-2.5,6) --(2.5,6);
\draw  [ultra thick]  (-5.10,4) --(-2.5,6);\draw  [ultra thick]  (5.10,4) --(2.5,6);
\draw  [dashed, ultra thick]  (6.5,0) --(-2.5,6);\draw  [ultra thick]  (-5.10,4) --(2.5,6);\draw  [ultra thick]  (5.10,4) --(-2.5,6);\draw  [dashed, ultra thick]  (-6.5,0) --(-2.5,6); \draw  [dashed, ultra thick]  (6.5,0) --(2.5,6);
\draw  [ultra thick]  (-4.15,-5) --(1.5,-6.3);
\draw  [ultra thick]  (-2.5,-6) --(2.5,-6);

\draw   [fill] (0,0) circle [radius=0.1];
\draw   [dashed, thick] (0,0) circle [radius=6.5];

\node [above] at (2.5,6.1) {\large $b$};
\node [above] at (-2.5,6.1) {\large $a$};

\draw   [fill] (2.5,6) circle [radius=0.1];\draw   [fill] (-2.5,6) circle [radius=0.1];\draw   [fill] (5.10,4) circle [radius=0.1];\draw   [fill] (-5.10,4) circle [radius=0.1];
\draw   [fill] (-6.5,0) circle [radius=0.1];\draw   [fill] (6.5,0) circle [radius=0.1];\draw   [fill] (-4.15,-5) circle [radius=0.1];\draw   [fill] (1.5,-6.3) circle [radius=0.1];\draw   [fill] (-2.5,-6) circle [radius=0.1];\draw   [fill] (2.5,-6) circle [radius=0.1];

\node [above] at (0,6) { 1};
\node [above] at (3.9,5.1) { 0.5};
\node [above] at (-3.9,5.1) { 0.5};
\node [below] at (-2.3,5.6) { 0.5};
\node [below] at (2.3,5.6) {0.5};
\node [above] at (-5.2,2.5) { 0.5};
\node [above] at (5.2,2.5) {0.5};
\node [above] at (2.9,2.5) {0.5};
\node [above] at (-2.7,2.5) { 0.5};
\node [above] at (-2.1,-5.5) { 0};\node [above] at (1.8,-6) { 0};
\end{tikzpicture}
\caption{The complete graph $K_N$ and the extremal density $\rho^*$ for $\Ga(a,b)$.} 
\end{figure}
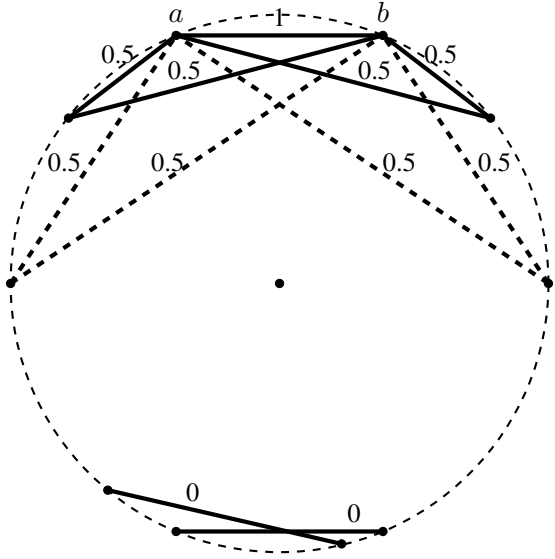

In formulas, $\rho^*(a,x)=1/2=\rho^*(b,x)$ for every $x\neq a,b$, and $\rho^*(a,b)=1$, otherwise $\rho^*$ is zero. To verify Beurling's criterion, consider the subfamily $\tilde{\Ga}$ of simple paths consisting of $a\ b$ and $a\ x\ b$ for any $x\neq a,b$.
We get that
\[
\Mod_p(\Ga(a,b)) = 1+ 2 (N-2) \frac{1}{2^p}\qquad\text{and}\qquad d_p(a,b)=\left(1+\frac{N-2}{2^{p-1}}\right)^{-1/p}.
\]

Since $s(p)\le s_\cB(p)\le p/(p-1)$ while $s_\cK(p)=\infty$, what are some natural families of graphs for which $p/(p-1)<s_{\cG}(p)<\infty$? 
For instance, what happens for the family of all hypercubes? Recall that
for an integer $N\geq 2$, the {\it hypercube} $H_N$ is the graph whose nodes are strings of $0$ and $1$ of length $N$ and two such strings are connected by an edge if they differ in exactly one position.

\subsection{Graph visualization}\label{sec:visual}

Metrics on networks play a vital role in applications as well as in
the study of intrinsic network characteristics.  For instance, there
are infinitely many ways to draw a network in two- or
three-dimensional space.  However, some choices of node layout are
clearly better than others for providing a meaningful visualization of
the network. Take a cycle graph on $5$ nodes, for example.  Drawing a
regular pentagon provides a much better representation of this graph
than does placing the $5$ nodes randomly in the plane. To relate this
to metrics, one need only observe that any time we draw a graph in the
plane, its node set inherits the Euclidean metric of the plane.  In
this sense, different drawings of the same graph $G$ represent
different choices of metric on the vertices $V$ and it thus seems
natural that the choice of layout should be closely related to the
network structure. For a beautiful example of deriving a network's
layout from its intrinsic structure,
see~\cite[Sec.~2.2]{spielman:bams2017}. 
Here, we briefly discuss the relationship between graph visualization and the metrics that we are studying in this paper.

A mapping $f: (X, d_X ) \rightarrow (Y, d_Y )$ of one metric space into another is called
an {\it isometric embedding} or {\it isometry} if $d_Y (f(x), f(y)) = d_X (x, y)$ for all
$x, y \in X$. Two metric spaces are isometric if there exists a bijective isometry between
them.
\begin{theorem}[Shoenberg, 1935]
Given a finite metric space $X= \{x_0, ...,x_m\}$ and an integer $n\in\bN$, $X$ embeds isometrically into $\R^n$ if and only if the matrix $M \in \R^{m\times m}$ whose entries are 
\[
d(x_i,x_0)^2 + d(x_0,x_j)^2-d(x_i,x_j)^2
\] 
is positive semi-definite and of rank less than or equal to $n $.
\end{theorem}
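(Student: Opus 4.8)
The plan is to translate the embedding problem into a statement about Gram matrices, for which positive semi-definiteness and bounded rank are exactly the defining features. The bridge is the elementary observation that the given matrix $M$ is, up to a factor of $2$, the Gram matrix of the embedded points once the distinguished point $x_0$ is placed at the origin.

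First I would set up the dictionary. An isometric embedding into $\R^n$ is a choice of points $v_0,\dots,v_m\in\R^n$ with $\|v_i-v_j\|=d(x_i,x_j)$ for all $i,j$. Since translations are isometries of $\R^n$, there is no loss in assuming $v_0=0$, so that $\|v_i\|=d(x_i,x_0)$. The polarization identity then gives
\[
\langle v_i,v_j\rangle=\tfrac{1}{2}\left(\|v_i\|^2+\|v_j\|^2-\|v_i-v_j\|^2\right)=\tfrac{1}{2}\left(d(x_i,x_0)^2+d(x_j,x_0)^2-d(x_i,x_j)^2\right)=\tfrac{1}{2}M_{ij},
\]
so that the Gram matrix $G$ of $v_1,\dots,v_m$ satisfies $M=2G$. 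With this identity in hand, both directions reduce to the classical characterization of Gram matrices. For the forward direction, if $X$ embeds isometrically into $\R^n$, then $M=2G$ for a genuine Gram matrix $G$ of $m$ vectors in $\R^n$; any Gram matrix is positive semi-definite, since $x^{T}Gx=\|\sum_i x_i v_i\|^2\ge0$, and has rank at most $n$ because its column space lies in the span of the $v_i$, which sits inside $\R^n$. Scaling by $2$ preserves both properties, so $M$ is positive semi-definite with $\rank M\le n$.

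For the converse, suppose $M$ is positive semi-definite with $\rank M\le n$. Then $G:=M/2$ is positive semi-definite of the same rank $r\le n$, so by the spectral theorem I can factor $G=U^{T}U$ with $U\in\R^{r\times m}$. Taking $v_1,\dots,v_m\in\R^{r}\subseteq\R^n$ to be the columns of $U$ and setting $v_0:=0$, I would verify, using $G_{ii}=\tfrac{1}{2}M_{ii}=d(x_i,x_0)^2$, that
\[
\|v_i-v_j\|^2=G_{ii}+G_{jj}-2G_{ij}=d(x_i,x_0)^2+d(x_j,x_0)^2-M_{ij}=d(x_i,x_j)^2,
\]
and likewise $\|v_i-v_0\|^2=G_{ii}=d(x_i,x_0)^2$. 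Hence $x_i\mapsto v_i$ is the desired isometric embedding.

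The computations above are all routine; the only genuine step is recognizing the correct normalization, namely that centering the configuration at the basepoint $x_0$ turns $M$ into \emph{twice} a Gram matrix, after which the equivalence is just the fact that a symmetric matrix is the Gram matrix of vectors in $\R^n$ precisely when it is positive semi-definite of rank at most $n$. I expect no serious obstacle here; the one thing to handle consistently in both directions is the asymmetric role of $x_0$ (the matrix $M$ is $m\times m$, indexed by $x_1,\dots,x_m$, rather than $(m+1)\times(m+1)$), which is exactly what makes the clean factor-of-two identity available.
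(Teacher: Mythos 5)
Your argument is correct, and it is the standard Gram-matrix proof of Schoenberg's criterion. Note, however, that the paper itself states this theorem as a classical cited result and gives no proof, so there is no in-paper argument to compare against; your write-up supplies the canonical one. The only point worth tightening is in the forward direction: the reason $\rank M\le n$ is that $G=V^{T}V$ where $V$ is the $n\times m$ matrix whose columns are $v_1,\dots,v_m$, so $\rank G\le\rank V\le n$ (the column space of $G$ lives in $\R^m$, not in the span of the $v_i$, so the phrase ``its column space lies in the span of the $v_i$'' is not literally what you want, though the conclusion is right). Everything else, including the factor-of-two normalization and the converse via the factorization $G=U^{T}U$ with $U\in\R^{r\times m}$, is accurate and complete.
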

For convenience, we will call the matrix $M$ the ``Shoenberg matrix''.
As an example, consider the square in Figure \ref{fig:square}.
We fix the node $a$ and derive the Shoenberg matrix so we can analyze the embeddings of this graph when the nodes are endowed with modulus metrics of the form $d_p^t$ for some $t>0$.
By symmetry, these metrics have the property that the distance between two neighboring nodes of the square is a constant $\alpha>0$ and the distance between diagonally opposite nodes is some other constant $\beta>0$.

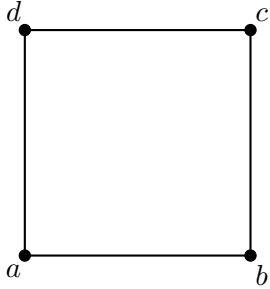
\begin{figure}[h!]
\center
\begin{tikzpicture}[scale=3] 
\draw  [thick]  (0,0) --(0,1);\draw  [thick]  (0,1) --(1,1);\draw  [thick]  (1,1) --(1,0);\draw  [thick]  (0,0) --(1,0);
\node [below] at (-0.05,0) {\large $a$};\node [below] at (1.05,0) {\large $b$};\node [above] at (-0.05,1) {\large $d$};\node [above] at (1.05,1) {\large $c$};
\draw   [fill] (0,0) circle [radius=0.025];\draw   [fill] (1,0) circle [radius=0.025];\draw   [fill] (0,1) circle [radius=0.025];\draw   [fill] (1,1) circle [radius=0.025];
\end{tikzpicture}
\caption{The square graph}\label{fig:square}
\end{figure}

Let the columns and rows of the matrix $M$ represent nodes $b, c, d$ respectively. The entry $M_{11}$ represents $(b,b)$ and is calculated as follows:
\[
d({\bf a},b)^2+d({\bf a},b)^2 - d(b,b)^2 =2\alpha.
\]
Note that this will be the case for $(d,d)$ as well. On the other hand, for $(c,c)$:
\[
d({\bf a},c)^2+d({\bf a},c)^2 - d(c,c)^2 = 2\beta^2
\]
The entry $M_{12}$ represents $(b,c)$ and is calculated as follows:
\[
d({\bf a},b)^2+d({\bf a},c)^2 - d(b,c)^2 = \beta^2
\]
 Likewise for $(c,d)$ we get $\beta^2$ again. For $(b,d)$ we have
\[
d({\bf a},b)^2+d({\bf a},d)^2 - d(b,d)^2 = 2\alpha^2-\beta^2
\]
 Putting the above information together, we can derive a Shoenberg matrix $M$ for these type of metrics on the square.
\[
M=
\begin{bmatrix}
    2\alpha      & \beta^2 & 2\alpha^2-\beta^2   \\
    \beta^2      & 2\beta^2 &  \beta^2  \\
    2\alpha^2-\beta^2      & \beta^2 & 2\alpha
\end{bmatrix}
\]
Note that for the triangle inequality to hold, we also want the condition, 
\[
\beta\le 2\alpha.
\]
Without loss of generality, we can normalize the edge distance to be $1$, setting $\alpha=1$, and then plot how the eigenvalues of $M$ change with $\beta$. In Figure \ref{fig:eigenvalues}, we have plotted the eigenvalues of $M$ as the normalized parameter which we still call $\beta$ varies from $0$ to $2$.
\begin{figure}[h!]
			\centering
			\includegraphics[width= 0.6\linewidth]{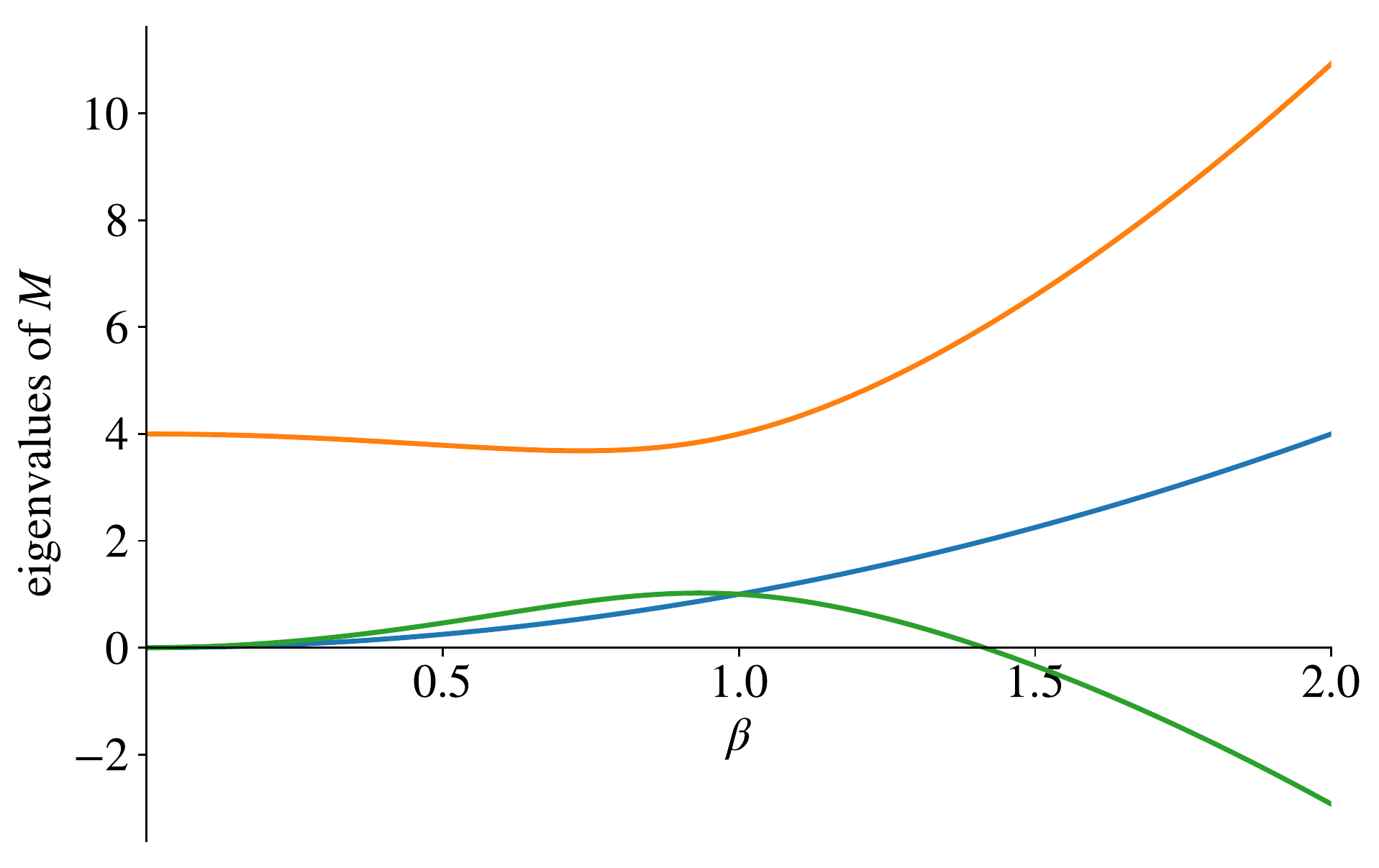}
\caption{Eigenvalues of $M$ as $\beta$ varies, given $\alpha=1$.}\label{fig:eigenvalues}
		\end{figure}
We observe that when $\beta>\sqrt{2} \approx 1.4$, the matrix $M$ starts having negative eigenvalues and thus fails to be positive semi-definite.
For the $\beta$ in $(0,\sqrt{2})$ the square is embeddable in $\R^3$ and for $\beta=\sqrt{2}$ it is embeddable in $\R^2$.  We describe these embeddings by fixing one edge of the square at $(1/2, 0, 0)$ and $(-1/2, 0, 0)$, while the opposite edge is horizontal at some height $h>0$ and by symmetry is also centered on the vertical axis. Since all four edges have unit length, the top horizontal edge must twist about the vertical axis  by an an angle $\theta\ge 0$.
The relationship between $h$ and $\theta$ is governed by the parameter $\beta$, the distance between diagonally opposite nodes. A simple calculation shows that
\[
h=\cos\frac{\theta}{2}\qquad\text{and}\qquad \beta=\sqrt{1+\cos\theta}.
\]
When $\theta=0$, $\beta=\sqrt{2}$ and the square is in the $xz$-plane. But for $\theta\in(0,\pi)$, the embedding is three-dimensional and $h$ tends to $0$ as $\theta$ tends to $\pi$. When $\theta=\pi$, $\beta=0$ and $d$ stops being a metric.

To connect this to our modulus metrics, note that the square is a special case of the cycle graphs we discussed in Section \ref{sec:biconnected}, when $N=4$. So by (\ref{eq:modac}) and (\ref{eq:modab}) applied to the edge $(a,b)$ and the diagonal $(a,c)$ in this case, we see that 
\[
d_p(a,b)=(1+3^{1-p})^{-1/p}\qquad\text{and}\qquad d_p(a,c)=2^{1-2/p}.
\]
Therefore, the ratio $\beta/\alpha$ for the case of the $d_p$ metrics is
\[
f(p):=2^{1-2/p}(1+3^{1-p})^{1/p}.
\]
A computation shows that $f$ is increasing, thus as $p$ decreases to $1$ the ratio $\beta/\alpha$ decreases to $f(1)=1$. Hence, the metric $d_p$ on the square graph is embeddable in $\R^3$ for $1\le p\le p_0$, for some value $p_0\approx 3.88$.

\subsection{Numerical algorithms}

There are a number of options for computing the modulus metrics numerically.  Here, we provide a short overview of some of the most efficient methods known to date.

\subsubsection{The direct approach}

One method for computing $d_p$ is to use the formulation of modulus given in Definition~\ref{def:mod} directly.  This formulates modulus as a convex optimization problem with finitely many affine inequality constraints.  For small graphs, one could simply enumerate all simple paths from $a$ to $b$, form the usage matrix, and pass the problem to any of several standard convex optimization solvers.

The problem with this approach, of course, is that the number of constraints tends to grow combinatorially with the graph size, making it computationally infeasible to even enumerate all constraints for larger graphs.  A modification that has proven effective in practice is the greedy algorithm described in~\cite{ASGPC}.  The idea is to iteratively build a subfamily $\Gamma'\subset\Gamma$ in such a way that $|\Gamma'|\ll|\Gamma|$ and $\Mod_p(\Gamma')\approx \Mod_p(\Gamma)$.  Initially, $\Gamma'$ is the empty set.  On each iteration of the algorithm, the smaller modulus problem $\Mod_p(\Gamma')$ is solved to obtain the optimal density $\rho'$.  If this density is admissible for the full problem, then the $\Gamma$-monotonicity property of modulus shows that it is optimal and $\Mod_p(\Gamma')=\Mod_p(\Gamma)$.  If it is not admissible, then more paths should be added to $\Gamma'$.  The ``greedy'' implementation of this algorithm is to add the path for which $\ell_{\rho'}(\gamma)=\ell_{\rho'}(\Gamma)$ (i.e., the ``most violated constraint'').  This approach can be modified with the stopping condition $\ell_{\rho'}(\Gamma) \ge 1-\epsilon_{\text{tol}}$, which gives an approximation to the modulus with both upper and lower bounds given in terms of the tolerance $\epsilon_{\text{tol}}$.

\subsubsection{The potential formulation}

The direct method described above is applicable to computing the modulus not only of connecting families, but also of more general families of objects on $G$. If we restrict attention to connecting families alone, there is an equivalent formulation in terms of vertex potentials~\cite[Theorem 4.2]{abppcw:ecgd2015}.  For $1\le p<\infty$, the modulus can be rewritten as a minimization over vertex potentials $\phi:V\to\mathbb{R}$ as follows.  For $a\ne b$ in $V$, we solve the problem
\begin{align*}
    \text{minimize}\quad& \sum_{\{x,y\}\in E}|\phi(x)-\phi(y)|^p\\
    \text{subject to}\quad& \phi(a)=0,\quad\phi(b)=1.
\end{align*}

The value of this problem is exactly $\Mod_p(\Gamma(a,b))$.  Moreover, the optimal density $\rho^*$ can be recovered from the optimal $\phi^*$ as $\rho^*(x,y)=|\phi^*(x)-\phi^*(y)|$, for every edge $\{x,y\}\in E$.  This provides a smaller (both in terms of unknowns and constraints) convex optimization formulation for modulus.

\subsubsection{Special cases}

There are also a three special cases, as can be seen in Theorem~\ref{thm:generalize}, for which the modulus metric can be computed using other known algorithms.  Since the $p=\infty$ case is equivalent to computing the graph distance between $a$ and $b$, a simple breadth-first search algorithm can be used to compute $d_\infty$.  Similarly, $d_1$ can be computed using any min-cut algorithm, and $d_2$ can be computed using any algorithm for computing effective resistance.

For $p=2$, moreover, there is also an efficient approach that can be used if all pairwise distances are needed.  (It is not clear if similar methods exist for other values of $p$, though it seems plausible that knowledge of some pairwise distances could accelerate the computations of others.)  For $p=2$, the $d_p$ distance is closely related to the graph Laplacian operator $L$.  In fact, it is known that
\begin{equation*}
  \mathcal{R}_{\text{eff}}(a,b) = (\delta_a-\delta_b)L^+(\delta_a-\delta_b)
  = L^+(a,a)+L^+(b,b)-2L^+(a,b),
\end{equation*}
where $L^+$ is the Moore-Penrose pseudoinverse of $L$.  Thus, any method for efficiently computing $L^+$ leads to an efficient method for computing pairwise $d_2$ distances.

\subsubsection{Numerical comparisons}

\begin{figure}
  \centering
  \includegraphics[width=0.6\textwidth]{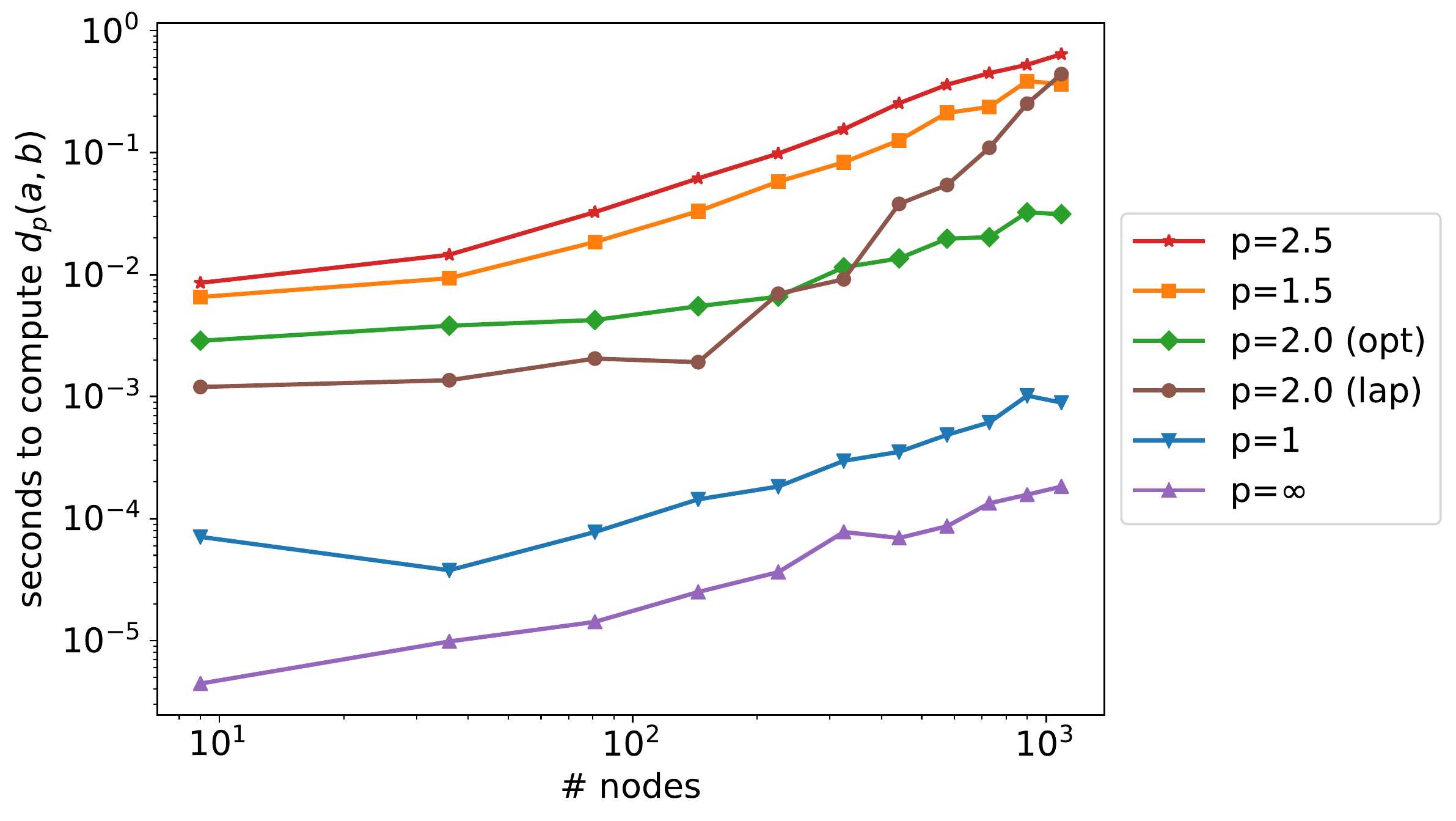}
  \caption{Comparisons of times required to compute $d_p$ distances on several square 2D grids for different values of $p$.}
  \label{fig:compute-times}
\end{figure}

A comparison of the time required to compute $d_p$ for various $p$ is shown in Figure~\ref{fig:compute-times}.  For this comparison, we measured the time required to compute the distance between opposite corners of an $n\times n$ square grid (with nodes connected to their horizontal and vertical neighbors) for $n=3,6,9,12,\ldots,33$.  All tests were performed on a 2.7GHz laptop computer using Python code.  The cases $p=1.5$ and $p=2.5$ were computed using the vertex potential formulation via the \texttt{cvxpy} package~\cite{cvxpy}.  The cases $p=1$ and $p=\infty$ were solved respectively using the minimum cut and shortest path algorithms of the \texttt{igraph} package~\cite{igraph}.  The case $p=2$ was computed in two different ways.  For the curve labeled ``2.0 (opt),'' the potential formulation was solved (as a quadratic program) by \texttt{cvxpy}.  For the curve labeled ``2.0 (lap),'' the pseudoinverse of the graph Laplacian was computed using \texttt{scipy.sparse.linalg}~\cite{scipy}.

For all cases other than ``2.0 (lap),'' we averaged the time to compute $d_p(a,b)$ over 20 trials to help filter the noise incurred by computing in a multi-tasking environment.  For the Laplacian case, we computed the time required to find $d_2(a,b)$. However, the comparison is complicated by the fact that once the pseudoinverse of the Laplacian  is computed this method will quickly deliver all other pairwise distances.

\section{Future research}

\noindent $\bullet$ We hope to be able to prove that $d_p^q$ is always a metric as conjectured in Conjecture \ref{conj:main}. The plan is to try and generalize one of the known proofs in the $p=2$ case.

\noindent{\bf Added in Proof:} The proof we gave in \cite{acfpc} is not a generalization of known proofs, rather it gives a new proof even in the $p=2$ case.

\noindent $\bullet$ Also, we showed in ~\cite{GASSP} that
\[
d_2^2(a,b)=\cReff(a,b)\le {\rm EHT}(a,b) \le d_\infty(a,b)
\]
where $\cReff(a,b)$ is the effective resistance and ${\rm EHT}(a,b)$ is epidemic hitting time. Is there  a relationship between the epidemic hitting time metric and  $d_p^q$ for some $p$?

\noindent $\bullet$ The example of the square graphs in Section \ref{sec:visual} raises the following question: Is it true that for an arbitrary simple graph $G=(V,E)$, there is always a value  $p_0$ so that $(V,d_{p_0})$ embeds isometrically in some $\R^n$, and if so, does this imply the same is true for all $p<p_0$? Similar questions can be asked for other powers of $d_p$.

\noindent $\bullet$ In the example of the square graph in Section \ref{sec:visual}, the $d_p$ metric can be raised to an exponent that is strictly larger than $q$. In fact, to find the largest possible exponent $t_{\rm max}$ in this case it's enough to set
\[
d_p(a,c)^t=2 d_p(a,b)^t,
\]
and solve for $t$. When this happen we say that the graph contains a ``flat triangle''.
A calculation shows that 
\[
t_{\rm max}=\frac{1}{1-\frac{1}{p}\left(2-\log_2(1+3^{1-p})\right)}>\frac{1}{1-\frac{1}{p}}=q.
\]
More generally, we intend to do the same computation for the cube in $\R^3$ and study whether
the anti-snowflaking exponent $s_{\cH}(p)$ can be computed for the family of hypercubes $\cH$.
These are graphs that arise in the theory of expander graphs and are considered to be very well connected, hence, their triangles should be far from being flat.

\noindent $\bullet$ Finally, we are interested in studying the monotonicity properties of the metric $d_p$ and the conjectured metric $d_p^q$.

\section*{Acknowledgment}
The authors thank the anonymous referees for helpful comments that have improved the paper.


\providecommand{\href}[2]{#2}
\providecommand{\arxiv}[1]{\href{http://arxiv.org/abs/#1}{arXiv:#1}}
\providecommand{\url}[1]{\texttt{#1}}
\providecommand{\urlprefix}{URL }

\medskip
\medskip

\end{document}